\newtheorem{theorem}{Theorem}[section]
\newtheorem{lemma}[theorem]{Lemma}
\newtheorem{proposition}[theorem]{Proposition}
\theoremstyle{definition}
\newtheorem{definition}[theorem]{Definition}
\newtheorem{example}[theorem]{Example}
\newtheorem{remark}[theorem]{Remark}
\newtheorem{notation}[theorem]{Notation}
\newcommand{\Arf}{\mathit{Arf}}
\newcommand{\Z}{\mathbb{Z}}
\newcommand{\lk}{\ell k}
\newcommand{\rad}{\operatorname{rad}}
\newcommand{\T}{\operatorname{T}}
\newcommand{\dc}{\textcolor{black}}
\title{On Arf invariants of colored links}
\author{David Cimasoni}
\address{David Cimasoni -- Section de math\'ematiques, Universit\'e de Gen\`eve, Suisse}
\email{david.cimasoni@unige.ch}
\author{Ga\"etan Simian}
\address{Ga\"etan Simian -- Section de math\'ematiques, Universit\'e de Gen\`eve, Suisse}
\email{gaetan.simian@unige.ch}
\date{}
\begin{document}

\maketitle

\begin{abstract}
Several classical knot invariants, such as the Alexander polynomial, the Levine-Tristram signature and
the Blanchfield pairing, admit natural extensions from knots to links, and more generally, from oriented links to so-called colored links.
In this note, we explore such extensions of the Arf invariant. Inspired by the three examples stated above, we use
generalized Seifert forms to construct quadratic forms, and determine when the Arf invariant of such a form yields a well-defined invariant of colored links. However, apart from the known case of oriented links, these new Arf invariants turn out to be determined by the linking numbers.
\end{abstract}

\section{Introduction}
\label{sec:intro}

The Arf invariant of a knot~$K$ in~$S^3$ is among the simplest and best understood knot invariants.
It can be defined as follows. Let~$F$ be a {\em Seifert surface\/} for~$K$, i.e.\ an oriented connected compact
surface smoothly embedded in~$S^3$ with oriented boundary~$\partial F=K$. Let~$q\colon H_1(F;\Z_2)\to\Z_2$ be the map given by~$q(x)=\lk(x^-,x)$, where~$\lk$ stands for the linking number (here, reduced modulo~2), and~$x^-\subset S^3\setminus F$ denotes the cycle~$x\subset F$ pushed in the negative normal direction off~$F$. Then, the map~$q$ takes one of the two values in~$\Z_2$ more often than the other, and this value
does not depend on the choice of the Seifert surface~$F$. It is therefore an invariant of the knot~$K$, usually denoted by~$\Arf(K)\in\Z_2$, and referred to as the {\em Arf invariant\/} (or {\em Robertello invariant\/}~\cite{Rob65}) of~$K$.
Extending it from knots to oriented links is straightforward as long as the link~$L$ is such that~$\lk(K,L\setminus K)$ is even for each component~$K$ of~$L$~\cite{Rob65}. Indeed, this condition ensures that~$q$ induces a non-singular quadratic form,
whose Arf invariant (in the sense of the original article of Arf~\cite{Arf41}) can then be used,
see Section~\ref{sub:Arf-L} below.

The Arf invariant of knots enjoys a number of remarkable properties. First of all, it is an invariant of  knot concordance~\cite{Rob65}; more precisely, a knot has vanishing Arf invariant if and only if it is 0-solvable
in the sense of Cochran-Orr-Teichner~\cite{COT03}.
Also, it can be retrieved from the Alexander polynomial of~$K$ evaluated at~$t=-1$, as it coincides with the parity of the second coefficient of the Conway polynomial of~$K$~\cite{Rob65,Lev66,Mur69}. Furthermore, it can be computed from the Jones polynomial evaluated at~$t=i$, see e.g.~\cite[Chapter~10]{Lic97}.
Finally, it admits a number of alternative interpretations, for example in terms of the cobordism class
of the~0-surgery of~$K$~\cite[Remark~8.2]{COT03} (see also~\cite[Section~5.2]{C-N20} for details),
and in terms of twisted Whitney towers~\cite[Lemma~10]{CST14} (see also~\cite[Proposition~6.4]{FMN21}).

\medskip

As mentioned above, the Arf invariant of an oriented link can be defined and computed using the Seifert form~\cite{Sei35}, one of the most ubiquitous objects in classical knot theory. Other examples of invariants which can be defined, studied and computed using this tool include the Alexander-Conway polynomial~\cite{Ale28,Con70,Kau81}, the Alexander module~\cite{Sei35,Lev66}, the Levine-Tristram signature~\cite{Tro62,Lev69,Tri69}, and the Blanchfield form~\cite{Bla57,Kea75,F-P17}.

These four invariants can be naturally extended from oriented links to so-called {\em colored links\/}, whose definition we now recall.
An~$m${\em-colored link} is an oriented link~$L$ each of whose components is endowed with a {\em color\/} in~$\{1,\dots,m\}$ so that all colors are used.
Such a colored link is commonly denoted by~$L=L_1\cup\dots\cup L_m$,
with~$L_i$ the sublink of~$L$ consisting of the components of color~$i$.
Obviously, a~$1$-colored link is nothing but an oriented link,
while an~$m$-component~$m$-colored link is an oriented ordered link.

Let us now be more precise about the aforementioned extensions: for~$m$-colored links,
one can define~$m$-variable versions of the Alexander-Conway polynomial~\cite{Con70,Har83},
of the Alexander module~\cite{Ale28}, of the Levine-Tristram signature~\cite{C-F08}, and of the Blanchfield form~\cite{Bla57}.
Moreover, each of these invariants can be defined and studied using generalized Seifert surfaces 
known as {\em C-complexes\/}, first introduced by Cooper~\cite{Coo82} for~2-component links
and extended to general colored links in~\cite{Cim04}, see Sections~\ref{sub:C-complex} and~\ref{sub:GSM} below.
We refer the interested reader to~\cite{Cim04}
for such a geometric construction of the Alexander-Conway multivariable polynomial, to~\cite{C-F08}
for the Alexander module and Levine-Tristram signature, and to~\cite{CFT18,Con18} for the Blanchfield pairing.

\medskip

This work is an attempt to construct generalized Arf invariants of colored links using similar techniques.
One the one hand, this attempt is rather successful, as testified by our main result (see \dc{Section~\ref{sub:main}
for the construction}, Theorem~\ref{thm:main}
for a more complete version, together with Remarks~\ref{rem:Eodd} and~\ref{rem:c-ex} showing that all the
hypotheses are necessary).

\begin{theorem}
\label{thm:intro}
Let~$L$ be an~$m$-colored link, and let~$F$ be a C-complex for~$L$.
For each~$E\subset\{\pm\}^m$ of even cardinality,
there is a quadratic form~$q_F^E\colon H_1(F;\Z_2)\to\Z_2$
such that one of the following statements holds, the same one for any choice of~$F$:
\begin{itemize}
\item[(i)] $q_F^E$ induces a non-singular quadratic form, with Arf invariant~$0$;
\item[(ii)] $q_F^E$ induces a non-singular quadratic form, with Arf invariant~$1$;
\item[(iii)]  $q_F^E$ does not induce a non-singular quadratic form.
\end{itemize}
\end{theorem}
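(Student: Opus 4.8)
The plan is to first make the construction of~$q_F^E$ precise, together with the meaning of (i)--(iii), and then to establish its independence of the C-complex~$F$ by reducing the claim to a statement of~$\Z_2$-linear algebra which is checked against a generating set of moves relating C-complexes. Recall that a C-complex~$F=F_1\cup\dots\cup F_m$ for~$L$ carries, for each sign vector~$\varepsilon\in\{\pm\}^m$, a generalized Seifert pairing~$\alpha^\varepsilon\colon H_1(F;\Z)\times H_1(F;\Z)\to\Z$ given by~$\alpha^\varepsilon(x,y)=\lk(x^\varepsilon,y)$, where~$x^\varepsilon$ is~$x$ pushed off~$F_j$ in the~$\varepsilon_j$-direction for each colour~$j$. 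As~$H_1$ of a C-complex is free, a lift of a class in~$H_1(F;\Z_2)$ to~$H_1(F;\Z)$ is well defined modulo~$2H_1(F;\Z)$, so~$\alpha^\varepsilon(x,x)\bmod 2$ is well defined on~$H_1(F;\Z_2)$, and one sets~$q_F^E(x):=\sum_{\varepsilon\in E}\alpha^\varepsilon(x,x)\bmod 2$. This satisfies~$q_F^E(x+y)-q_F^E(x)-q_F^E(y)=b_F^E(x,y)$ with~$b_F^E=\sum_{\varepsilon\in E}\bigl(\alpha^\varepsilon+(\alpha^\varepsilon)^\top\bigr)\bmod 2$, automatically an alternating form; the hypothesis that~$|E|$ is even is what forces~$b_F^E$ to be the bilinear form determined by the intersection pattern of~$F$, so that, in particular, $\rad(b_F^E)$ is spanned by boundary classes. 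One then declares that~$q_F^E$ \emph{induces a non-singular quadratic form} precisely when~$q_F^E$ vanishes on~$\rad(b_F^E)$: in that case~$q_F^E$ descends to~$H_1(F;\Z_2)/\rad(b_F^E)$, where the induced alternating form is non-degenerate, and the descended form has a well-defined Arf invariant; cases (i) and (ii) are this with~$\Arf=0$ and~$1$, while (iii) is the failure~$q_F^E(\rad(b_F^E))\neq\{0\}$. The content of the theorem is that which alternative occurs is independent of~$F$.

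To prove this I would use that any two C-complexes for~$L$ are related by a finite sequence of elementary moves and their inverses (the C-complex analogue of the fact that any two Seifert surfaces are related by isotopies and tube stabilizations; see the introduction for references): ambient isotopy; stabilization by a trivial tube attached inside one of the~$F_i$; and clasp moves, i.e.\ a finger push of one sheet through another creating or destroying a pair of intersection clasps. It then suffices to check that each move preserves the trichotomy, i.e.\ that the answer for~$\sum_{\varepsilon\in E}A^\varepsilon\bmod 2$ is unchanged when the tuple~$(A^\varepsilon)_\varepsilon$ of generalized Seifert matrices is modified by a simultaneous congruence over~$\Z$ together with the enlargement produced by the move. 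Isotopy is trivial. For a tube stabilization, $H_1(F';\Z_2)\cong H_1(F;\Z_2)\oplus P$ with~$P$ a hyperbolic plane that, after a change of basis, is orthogonal to the old summand, and the meridian generator~$a$ of the tube has~$q_{F'}^E(a)=0$ since it bounds an unknotted disc disjoint from~$F$; as~$\Arf$ is additive over orthogonal sums and vanishes on a hyperbolic plane containing an isotropic vector, while~$\rad(b_F^E)$ and the restriction~$q_F^E|_{\rad}$ are untouched, the trichotomy survives. (This is one place where~$|E|$ even is needed, to absorb the~$\varepsilon$-dependent longitude--meridian contributions to~$\sum_{\varepsilon\in E}A^\varepsilon$.)

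The main obstacle is the clasp move. Creating a clasp between~$F_i$ and~$F_j$ enlarges~$H_1(F;\Z_2)$ by a rank-two summand while shearing the old generators, and one must write down explicitly how~$q_F^E$ and~$b_F^E$ act on the new generators and on the sheared old ones, keeping track of the~$\varepsilon$-dependence of the new linking numbers and of the asymmetric roles of the two colours involved. The claim to be extracted --- and this is exactly where evenness of~$|E|$ re-enters --- is that the new rank-two piece is, up to a~$q_F^E$-preserving shearing of the basis, an orthogonal summand that is ``harmless'': $q_F^E$ vanishes on whatever part of it lies in~$\rad(b_F^E)$ and carries an isotropic vector on the rest, so that neither membership in case (iii) nor the value of~$\Arf$ changes. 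This bookkeeping is the technical core of the argument; equivalently, one can first exhibit~$q_F^E$ and~$b_F^E$, on a basis of~$H_1(F;\Z_2)$ adapted to the clasp structure, purely in terms of the intersection forms of the~$F_i$ and the linking numbers~$\lk(L_i,L_j)$, which yields both the~$F$-independence and --- as used in the full statement of the theorem --- the fact that these Arf invariants are determined by the linking numbers.
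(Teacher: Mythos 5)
Your construction of~$q_F^E$ and your reading of the trichotomy agree with Section~\ref{sub:main}, but the reduction step rests on a move theorem that does not exist in the form you assume, and this is a genuine gap. You take for granted that any two C-complexes for~$L$ are related by isotopies, tube stabilizations inside one surface, and clasp-pair creations/annihilations. Relating C-complexes is precisely the delicate point of the subject: the correct statement (Theorem~1.3 of~\cite{DMO21}, Lemma~\ref{S-equivalence} here) needs the moves~(T2), (T3) and~(T4), and in this paper it must moreover be upgraded (Appendix~\ref{sec:appendix}) so that every intermediate C-complex remains \emph{totally connected}. Total connectivity is not a technical convenience: Remark~\ref{rem:c-ex} exhibits two C-complexes related by a single~(T3) move, one of them not totally connected, for which the trichotomy differs. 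Consequently, if your simpler move set together with your claimed ``harmlessness'' of clasp moves were both correct, you would obtain invariance over \emph{all} C-complexes, contradicting that example; so at least one of the two assumptions must fail. Relatedly, your assertion that~$\rad(b_F^E)$ is spanned by boundary classes is false for~$m>1$: in Example~\ref{ex:Arf}, for the~$(2,4)$-torus link and~$E=\{++,+-\}$, the radical is generated by a cycle crossing the clasp, and this is exactly how case~(iii) arises there.

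Second, what you call ``the technical core'' --- the effect of the clasp-type moves on the pair~$(q_F^E,b_F^E)$ --- is left entirely unproved, and it is not routine. In the paper it is handled by introducing an auxiliary move~(T5) (Lemma~\ref{T5invariance}), through which~(T2) and~(T3) factor, together with the explicit matrix formulas~\eqref{eq:T1}, \eqref{eq:T4}, \eqref{eq:T4'} of the appendix; the move~(T4) requires a separate argument. In the~(T5) computation the new rank-two piece is \emph{not} in general an orthogonal summand: when the parities~$n_{\mathrm{e}},n_{\mathrm{o}}$ are odd, one only gets~$\rad(B^E_{F'})=\rad(B^E_F)$ after a congruence, and the Arf invariants are compared via a symplectic basis~$(a'_k)\cup(x,y)$ with~$a'_k\in\{a_k,a_k+x\}$, using~$q^E_{F'}(x)=0$, cf.~\eqref{eq:A-A'} and~\eqref{eq:B-B'}. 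The evenness of~$|E|$ enters through these specific parity counts (and again for~(T1) via~$n_i^{\sigma_i}\equiv n_i^{-\sigma_i}$), not merely to ``absorb longitude--meridian contributions''. Without the correct move theorem restricted to totally connected C-complexes and without these computations, the proposed proof does not go through.
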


As will be made clear in Section~\ref{sub:main}, the classical Arf invariant corresponds to the case~$m=1$ and the choice of a sign, i.e.\ of an odd subset~$E\subset\{\pm\}$. However, for~$m>1$, only even subsets~$E\subset\{\pm\}^m$ yield invariants (see Remark~\ref{rem:Eodd}). By Theorem~\ref{thm:intro},
any such~$E\subset\{\pm\}^m$ of even cardinality defines an Arf invariant
for some class of~$m$-colored links which depends on~$E$.

On the other hand, our attempt can be considered as unsuccessful, due to the following fact.

\begin{proposition}
\label{prop:intro}
For any~$m$-colored link~$L$ and any~$E\subset\{\pm\}^m$  of even cardinality,
the trichotomy of Theorem~\ref{thm:intro} is determined by the linking numbers of the components of~$L$.
\end{proposition}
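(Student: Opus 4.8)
The strategy is to reduce the trichotomy to a computation with a concrete, explicit quadratic form whose relevant invariants (nonsingularity of the associated bilinear form and the Arf invariant when it exists) are manifestly functions of the linking numbers alone. First I would recall from Section~\ref{sub:main} the precise definition of $q_F^E$: it is built from the generalized Seifert forms $\beta^{\varepsilon}$ associated to the C-complex $F$, summed over the signs $\varepsilon\in E$, and reduced mod~2. The bilinear form attached to $q_F^E$ is then $\sum_{\varepsilon\in E}(\beta^{\varepsilon}+(\beta^{\varepsilon})^{\T})\bmod 2$, which is the mod~2 reduction of a generalized intersection-type form on $H_1(F;\Z_2)$. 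The key structural input is that $H_1(F;\Z_2)$ has a basis adapted to the C-complex: loops supported on a single surface $F_i$ (with the clasps contributing the ``boundary'' part), and the interaction between different colors is governed entirely by linking numbers of the corresponding components. I would make this explicit by choosing the standard generating set for $H_1(F;\Z_2)$ coming from the Mayer-Vietoris decomposition of $F=F_1\cup\cdots\cup F_m$ along its clasps.

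Next I would show that, in such a basis, every entry of the matrix of the bilinear form of $q_F^E$, as well as the value of $q_F^E$ on each basis element, is a universal polynomial (in fact an affine-linear expression mod~2) in the pairwise linking numbers $\lk(K,K')$ of the components of $L$ and in $|E|$-type combinatorial data depending only on $E$. For the ``diagonal'' loops lying in a single $F_i$, the contribution to $q_F^E$ is $\sum_{\varepsilon\in E}\lk(x^{\varepsilon},x)$, and since a loop in a Seifert-type surface satisfies $\lk(x^{+},x)+\lk(x^{-},x)=x\cdot x=0$ mod~2 for the relevant pushoffs, the sum over $\varepsilon\in E$ collapses (using $|E|$ even) to something depending only on the parity pattern of $E$. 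For loops passing through clasps between $F_i$ and $F_j$, the relevant linking and self-linking numbers are exactly the $\lk(K,K')$ with $K\subset L_i$, $K'\subset L_j$, again reduced mod~2. Assembling these computations, the matrix of the bilinear form over $\Z_2$ and the quadratic refinement are completely determined by the mod~2 linking numbers.

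The final step is to observe that whether we are in case (i), (ii), or (iii) depends only on this $\Z_2$-matrix and this quadratic refinement: nonsingularity is the nonvanishing of the determinant of the matrix (case (iii) versus (i)--(ii)), and when nonsingular the Arf invariant is $\sum q_F^E(a_k)q_F^E(b_k)$ over a symplectic basis $\{a_k,b_k\}$, which is again a function of the matrix entries. Since all of these are determined by the linking numbers of $L$ (and by $E$, which is fixed), the trichotomy is determined by the linking numbers. I would note that independence of the choice of $F$ is already guaranteed by Theorem~\ref{thm:intro}, so it suffices to verify the claim for one convenient C-complex, e.g.\ one obtained from a fixed diagram. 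The main obstacle I anticipate is the bookkeeping in the second step: one must track carefully how the clasp intersections enter the generalized Seifert form $\beta^{\varepsilon}$ for each sign vector $\varepsilon$, and check that after summing over $E$ and reducing mod~2 the ``interesting'' (non-linking-number) contributions—such as self-linking terms $\lk(x^{\varepsilon},x)$ of loops within a single surface, which individually are \emph{not} linking numbers of components—either cancel in pairs or reduce to the parity of $|E|$, hence drop out. Verifying this cancellation is exactly where the evenness hypothesis on $|E|$ is used, and it is the heart of the argument.
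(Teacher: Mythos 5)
There is a genuine gap, and it is located exactly where you place the ``heart of the argument''. Your step 2 claims that, in a basis adapted to the C-complex, the values of $q_F^E$ and the entries of its bilinear form are universal expressions in the \emph{mod 2} linking numbers, so that the trichotomy would be a function of the $\lk(K,K')$ reduced modulo 2. This is contradicted by the paper's own Example~\ref{ex:Arf}: for $E=\{++,+-\}$ the Hopf link ($\lk=1$) has $\Arf_E=0$ while the $(2,6)$-torus link ($\lk=3$) has $\Arf_E=1$, and the $(2,4)$-torus link ($\lk=2$) has $\Arf_E$ undefined. Two links with the same mod-2 linking numbers thus have different values, so the dependence is on the \emph{integer} linking numbers, and no mod-2 affine expression in the $\lk(K,K')$ can compute the answer. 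Concretely, the hoped-for cancellation handles the self-linking terms $\lk(x^{\varepsilon},x)$ of cycles inside a single $F_i$ (these do vanish when $|E|$ is even), but it does not control the cycles through clasps: the number and configuration of clasps between $F_i$ and $F_j$ is not determined by $\lk(L_i,L_j)$ (clasps of opposite signs may be present in excess of the linking number), and the framing/linking data of the connecting arcs is genuine geometric information, not a function of the component linking numbers. The fallback ``it suffices to verify the claim for one convenient C-complex'' does not repair this: Theorem~\ref{thm:main} gives independence of $F$ only for a \emph{fixed} link $L$, whereas the proposition requires comparing two non-isotopic links $L,L'$ with equal linking numbers, and your plan contains no mechanism for such a comparison.

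The paper supplies exactly this missing mechanism, and its route is quite different from yours. It first proves that $\Arf_E$ is unchanged by crossing changes between strands of the same color (Lemma~\ref{lemma:homotopy1}, where evenness of $|E|$ produces the cancellation you anticipated, but for a band twist on a fixed C-complex), then by crossing changes between colors $i,j$ whenever $n_{ij}^{++}\equiv n_{ij}^{--}\pmod 2$ (Lemma~\ref{lemma:homotopy}), which yields a partition $I_0\sqcup I_1$ of the colors such that crossing changes within each class preserve the invariant (Lemma~\ref{partition}). Merging the colors in each class gives a $2$-colored link, and the key input is then Lemma~\ref{lemma:classification} (due to Davis): $2$-colored links are classified up to such homotopy by the integer matrix of linking numbers between components of different colors, so every link can be pushed to a normal form determined by its linking numbers without changing $\Arf_E$. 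In short, your computation-on-one-surface strategy can at best reprove the invariance statements already contained in Theorem~\ref{thm:main}; the essential new ingredient for Proposition~\ref{prop:intro} is a homotopy-classification argument that lets you move between different links with the same linking numbers, and that step is absent from your proposal.
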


In conclusion, this note should serve as a cautionary tale for anyone interested in extending the Arf invariant in this manner.

\medskip

This paper is organized as follows. In Section~\ref{sec:background}, we recall the necessary background:
the Arf invariant of a quadratic form, the Arf invariant of a link, C-complexes and generalized Seifert matrices are dealt with in individual paragraphs.
Section~\ref{sec:main} contains the proof of Theorem~\ref{thm:intro},
and Section~\ref{sec:lk} the proof of Proposition~\ref{prop:intro}.
\dc{Finally, Appendix~\ref{sec:appendix} deals with generalized S-equivalence.}

\subsection*{Acknowledgements} The authors thank Anthony Conway and Stefan Friedl for helpful remarks on an earlier version of this article, and Christopher Davis for suggesting Lemma~\ref{lemma:classification}.
Support from the Swiss NSF grant 200021-212085 is thankfully acknowledged.

\section{Background on Arf invariants and generalized Seifert forms}
\label{sec:background}

This section contains all the background needed for this work:
we review the Arf invariant of a quadratic form  in Section~\ref{sub:Arf-q},
the Arf invariant of an oriented link in Section~\ref{sub:Arf-L}, C-complexes in
Section~\ref{sub:C-complex}, and the associated generalized Seifert matrices in Section~\ref{sub:GSM}.

\subsection{The Arf invariant of a quadratic form}
\label{sub:Arf-q}

In this section and the following one, we briefly recall the definition of the Arf invariant of a quadratic form~\cite{Arf41} and of an oriented link~\cite{Rob65}, following~\cite[Chapter~10]{Lic97} and referring to it for proofs and details.

\medskip

Let~$V$ be a finite-dimensional~$\Z_2$-vector space. A function~$q\colon V\to\mathbb{Z}_2$ is a {\em quadratic form\/} if there exists a bilinear map $B\colon V\times V \to \mathbb{Z}_2$ such that~$q(x+y)+q(x)+q(y)=B(x,y)$ for all $x,y\in V$. The quadratic form~$q$ is called {\em non-singular\/} if the bilinear map~$B$ itself is non-singular, i.e.\ if its radical
\[
\rad(B)=\{x\in V\,|\,B(x,y)=0\text{ for all~$y\in V$}\}
\]
is trivial.
In this case, since the non-singular bilinear map~$B$ also satisfies~$B(x,x)=0$ for all~$x\in V$, it is
by definition a symplectic form. Such forms are known to exist only if~$V$ has even dimension, say~$2n$.
It can then be shown that, as $x$ varies over the~$2^{2n}$ elements of~$V$,~$q(x)$ takes one of the values,~$0$ or~$1$, more often (namely~$2^{2n-1}+2^{n-1}$ times), than the other one ($2^{2n-1}-2^{n-1}$ times). This leads to 
what is sometimes referred to as the ``democratic invariant'' of~$q$.

\begin{definition}
The {\em Arf invariant}  $\Arf(q)\in\Z_2$ of a non-singular quadratic form~$q$ is the value  taken the most often by $q(x)$ as~$x$ varies over the elements of~$V$. 
\end{definition}

A closed formula defining the Arf invariant is easily seen to be given by
\[
(-1)^{\Arf(q)}=\frac{1}{\sqrt{|V|}}\sum_{x\in V}(-1)^{q(x)}\,.
\]
A more practical formula, due to Cahit Arf\footnote{This formula appears on the reverse side of the 2009 Turkish 10 lira note.}, uses  a {\em symplectic basis\/} of~$V$, i.e.\ a basis ${e_1,f_1,\ldots,e_n,f_n}$ such that~$B(e_i,e_j)=B(f_i,f_j)=0$ and~$B(e_i,f_j)=\delta_{ij}$ for all~$1\le i,j\le n$. It reads:
\begin{equation}
\label{formularf}
\Arf(q)=\sum_{i=1}^n{q(e_i)q(f_i)}\,.
\end{equation}
One of the main results of Arf in~\cite{Arf41} is that two non-singular quadratic forms are isomorphic if and only if they have
same dimension and same Arf invariant.

This invariant can be extended to a possibly singular quadratic form~$q\colon V\to\mathbb{Z}_2$ as long as it vanishes on the radical
of the associated bilinear form~$B$. In such a case indeed, the map~$q\colon V\to\mathbb{Z}_2$ induces a well-defined non-singular quadratic form~$\overline{q}\colon V/\rad(B)\to\mathbb{Z}_2$, whose Arf invariant can be considered. On the other hand,
if~$q$ does not vanish on~$\rad(B)$, then its Arf invariant is not defined.\footnote{In particular, one easily checks that the voting process results in a draw.}

In summary, an arbitrary quadratic form~$q\colon V\to\Z_2$ always
satisfies the following trichotomy: either it induces a non-singular quadratic form with Arf invariant~$0$, or
it induces a non-singular quadratic form with Arf invariant~$1$, or it does not induce a non-singular quadratic form.
We shall simply denote these three possibilities by~$\Arf(q)=0$,~$\Arf(q)=1$, and~$\Arf(q)$ {\em undefined\/},
and call these three cases the three possible {\em values\/} for~$\Arf(q)$.

\subsection{The Arf invariant of an oriented link}
\label{sub:Arf-L}

As already mentioned in the introduction, this theory can be applied to knots and links, in the following way. Let~$L$ be an oriented link in~$S^3$, and let~$F$ be a Seifert surface for~$L$. Let~$q\colon H_1(F;\Z_2)\to\Z_2$
be the associated Seifert form reduced modulo~2 and restricted to the diagonal. In other words, the map~$q$ is defined by~$q(x)=\lk(x^-,x)$, where~$\lk$ stands for the linking number reduced modulo~2 and~$x^-\subset S^3\setminus F$ denotes the cycle~$x\subset F$ pushed  in
the negative direction off the oriented surface~$F$. For any~$x,y\in H_1(F;\Z_2)$, we have
\[
q(x+y)+q(x)+q(y)=\lk(x^-,y)+\lk(y^-,x)=\lk(x^-,y)+\lk(x^+,y)=B(x,y)\,,
\]
with~$B\colon H_1(F;\mathbb{Z}_2)\times H_1(F;\mathbb{Z}_2)\to\Z_2$ the modulo~2 intersection form. Therefore,
we see that the map~$q$ is a quadratic form.

This form is singular unless~$L$ is a knot, as~$\rad(B)$ is generated by
the cycles given by the boundary components of~$F$, i.e.\ the components of~$L$.
For~$q$ to induce a non-singular quadratic form~$\overline{q}$ on the quotient~$H_1(F;\Z_2)/\rad(B)$,
we therefore need~$q$ to vanish on the components of~$L$. This is the case if and only if~$\lk(K,L\setminus K)$
is even for each component~$K$ of~$L$.

\begin{definition}
Let~$L$ be an oriented link with~$\lk(K,L\setminus K)$ even for each component~$K\subset L$.
The {\em Arf invariant} $\Arf(L)\in\Z_2$ of~$L$ is the Arf invariant of the induced non-singular quadratic
form~$\overline{q}$.
\end{definition}

To prove that this is a well-defined invariant, one needs to check that it does not depend on the choice
of the Seifert surface~$F$ for~$L$. By a classical result (see e.g.~\cite[Chapter~8]{Lic97}),
two Seifert surfaces for the same link are connected by a finite sequence of ambient isotopies and handle attachments. This translates into an equivalence relation on Seifert matrices, known as {\em S-equivalence\/}, 
and it is an easy exercise to check that~S-equivalent matrices yield the same Arf invariant.

\subsection{C-complexes for colored links}
\label{sub:C-complex}

The goal of this section is to introduce generalized Seifert surfaces for colored links.
We start by recalling the definition of a colored link already sketched in the introduction. 

\begin{definition}
Let~$m$ be a positive integer. An {\em ${m}$-colored link} is an oriented link $L$ in $S^3$ together with a surjective map assigning to each component of~$L$ a {\em color\/} in~$\{1,\ldots,m\}$.
Two colored links~$L$ and~$L'$ are said to be {\em isotopic\/} if there exists an isotopy between $L$ and $L'$ preserving the orientation and color of each component.
\end{definition}

We denote a colored link by~$L=L_1\cup\dots\cup L_m$, where~$L_i$ stands for the sublink of~$L$ made of the
components of color~$i$. Colored links should be thought of as natural generalizations of oriented links,
which correspond to the case~$m=1$.

The corresponding generalized notion of Seifert surface is due to Cooper~\cite{Coo82},
who defined it for~2-component~2-colored links, objects which were subsequently extended to arbitrary colored
links in~\cite{Cim04}.

\begin{definition}
\label{def:C-cplx}
A {\em C-complex\/} for an~$m$-colored link~$L=L_1\cup\dots\cup L_m$ is a union~$F=F_1\cup\dots\cup F_m$ of surfaces embedded in~$S^3$ such that~$F$ is connected and satisfies the following conditions:
\begin{enumerate}
    \item for all~$i$, the surface~$F_i$ is a Seifert surface for~$L_i$;
    \item for all~$i\neq j$, the surfaces~$F_i$ and~$F_j$ are either disjoint or intersect in a finite number of {\em clasps\/} (see Figure~\ref{clasp-regular});
    \item for all~$i,j,k$ pairwise distinct, the intersection~$F_i\cap F_j\cap F_k$ is empty.
\end{enumerate}
Such a C-complex is said to be {\em totally connected} if~$F_i\cap F_j$ is non-empty for all~$i\neq j$.
\end{definition}

The existence of a (totally connected) C-complex for any given colored link is fairly easy to establish, see~\cite{Cim04}. On the other hand, relating two C-complexes for isotopic colored links is more difficult~\cite{Cim04,C-F08}, and the correct version appeared only recently, see Theorem~1.3 of~\cite{DMO21}.

In the context of our work, we need to know how two totally connected C-complexes for isotopic colored links are related. In fact, Theorem~1.3 of~\cite{DMO21} pertains to a more general version of C-complexes: they are not necessarily totally connected, and the surfaces~$F_i$ are not assumed to be connected.
Fortunately, the results of~\cite{DMO21} do imply the following lemma.
In order not to break the flow of the reading, we differ its proof to Appendix~\ref{sec:appendix}.

\begin{figure}
\centering
\begin{overpic}[width=10cm]{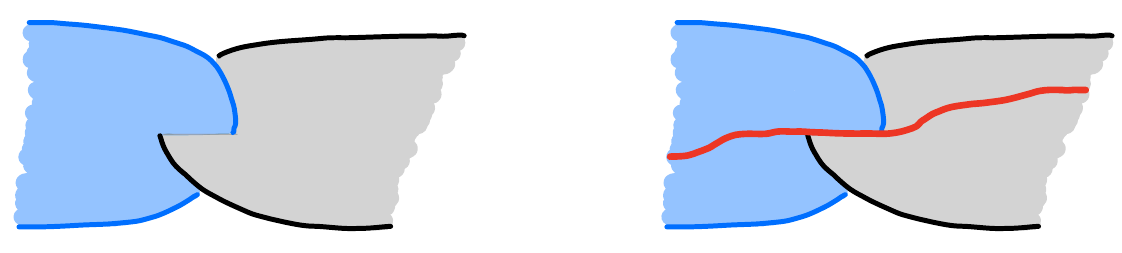}
 \put (5,15){$F_i$}
    \put (32,13){$F_j$}
    \end{overpic}
\caption{Left: a clasp intersection. Right: a well-behaved cycle crossing such a clasp intersection.}
\label{clasp-regular}
\end{figure}

\begin{lemma}[\cite{DMO21}]
Let $F$ and $F'$ be two totally connected C-complexes for isotopic colored links. Then, there exist totally connected C-complexes $F=F^1,F^2,\dots,F^{n-1},F^n=F'$ such that for all~$1\le k <n$,~$F^{k+1}$ is obtained from $F^k$ by one of the following moves or its inverse:
\begin{itemize}
\item[(T0)] ambient isotopy;
\item[(T1)] handle attachment along one of the surfaces (see Figure~\ref{ST1-2}, top);
\item[(T2)] add a ribbon intersection and push along an arc (see Figure \ref{T23}, left);
\item[(T3)] pass through a clasp (see Figure \ref{T23}, right);
\item[(T4)] replace a push along an arc by a push along another arc (see Figure~\ref{T4}).
\end{itemize}
\label{S-equivalence}
\end{lemma}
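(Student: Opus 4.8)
The plan is to deduce this statement from Theorem~1.3 of~\cite{DMO21}, which provides essentially the same list of moves (T0)--(T4) relating two C-complexes in the more general setting where the surfaces~$F_i$ need not be connected and the C-complex need not be totally connected. Since we are given two \emph{totally connected} C-complexes~$F$ and~$F'$ for isotopic colored links, the issue is that the intermediate C-complexes~$F^k$ produced by~\cite{DMO21} may fail to be totally connected, and the individual surfaces may be disconnected. So the crux is to \emph{upgrade} an arbitrary sequence of~(T0)--(T4) moves between two totally connected C-complexes into one passing only through totally connected C-complexes with connected pieces.

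First I would record the two basic normalization steps. \emph{Connecting the pieces:} if some~$F_i$ is disconnected, we may tube together its components using a~(T1) handle attachment supported in a small ball away from all other surfaces; this move and its inverse are among those allowed, so we can always assume each~$F_i$ is connected. \emph{Restoring total connectivity:} if~$F_i\cap F_j=\emptyset$ for some~$i\neq j$, we can create a clasp between them by a move in the~\cite{DMO21} list (creating a clasp is, up to the isotopy~(T0), the inverse of a clasp-passing~(T3) or is explicitly allowed as in the ``add a ribbon/clasp'' family); again this can be supported in a small ball near an arc joining~$F_i$ to~$F_j$. The key geometric observation is that these two normalization moves are \emph{local} and can be inserted into any given sequence without disturbing the rest of it.

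With these tools, I would run the following argument. Take a sequence $F=G^0, G^1,\dots,G^N=F'$ of general C-complexes connected by~(T0)--(T4) moves, as furnished by~\cite{DMO21}. Walking along this sequence, whenever a move produces a~$G^k$ that is not totally connected or has a disconnected piece, I would pre-compose and post-compose that move with the normalization moves above: just before performing the offending move, add clasps/tubes as needed so that the move is performed ``inside'' a totally connected configuration, and just after, remove any now-redundant clasps/tubes, or keep them if they are needed later. More carefully: I would insert, right after every~$G^k$ in the sequence, a maximal block of~(T1) tube moves and clasp-creation moves turning~$G^k$ into a \emph{totally connected C-complex with connected surfaces}~$\widetilde{G}^k$, chosen compatibly at consecutive steps so that the original move~$G^k\rightsquigarrow G^{k+1}$, suitably conjugated, becomes a move~$\widetilde{G}^k\rightsquigarrow\widetilde{G}^{k+1}$ of the same type~(T0)--(T4) (this uses that each of~(T0)--(T4) is itself supported away from most of the surface, so it commutes with the auxiliary clasps and tubes placed elsewhere). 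Since~$F=G^0$ and~$F'=G^N$ are already totally connected with connected pieces, we may take~$\widetilde{G}^0=F$ and~$\widetilde{G}^N=F'$, and the resulting sequence $F=\widetilde{G}^0,\widetilde{G}^1,\dots,\widetilde{G}^N=F'$ consists of totally connected C-complexes related by the moves~(T0)--(T4) and their inverses, as desired.

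I expect the main obstacle to be purely bookkeeping: verifying that the auxiliary clasp-creation and tubing moves can genuinely be chosen to be supported in balls disjoint from the region where the next move~(T0)--(T4) takes place, so that the two commute and the conjugated move is literally of the same type (rather than merely ``isotopic to'' one). This requires being a little careful about~(T2) and~(T3), where a new ribbon intersection or a clasp passage happens near an existing clasp: one must make sure the auxiliary clasps are positioned so as not to interfere. A secondary subtlety, which should be checked against~\cite{DMO21} directly, is that the move list in their Theorem~1.3 really does allow both handle attachment along a single surface and the creation of an extra clasp between two surfaces with only the auxiliary local support described above; if~\cite{DMO21} phrases these slightly differently, the block-insertion argument above still goes through after reexpressing their moves in the form~(T0)--(T4). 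Since this is routine once the locality is granted, I would relegate the verification to the appendix, as the excerpt indicates.
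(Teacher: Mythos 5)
Your overall strategy, namely starting from the sequence of moves supplied by~\cite{DMO21} and inserting auxiliary local ``connection'' moves so that every intermediate C-complex stays totally connected, is the same as the paper's. But there is a genuine gap at precisely the point you dismiss as bookkeeping. You propose to decorate each intermediate complex~$G^k$ with auxiliary clasps/tubes ``chosen compatibly at consecutive steps'' so that each original move commutes with the decorations and remains literally of type~(T0)--(T4). The sequence coming from~\cite{DMO21}, however, alternates ambient isotopies with the moves (T2)--(T4), and an ambient isotopy drags the auxiliary connection along with it; its image can then invade the ball supporting the next move, so in general you cannot keep one and the same auxiliary clasp disjoint from all later supports, and ``commuting it past'' the moves is simply not available. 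The paper resolves this by replacement rather than commutation: for a fixed pair of colors~$(i,j)$, at the first step~$\kappa$ where $(i,j)$-connectivity would be lost, one adds a connection by a (T2) move along an arc~$\gamma_\kappa$ avoiding the support ball~$B_\kappa$ (Figure~\ref{connection}) and performs the move; before the next move one first adds a \emph{new} connection along an arc disjoint from~$B_{\kappa+1}$ and from the current complex, then removes the old connection by an inverse (T2) move, and only then performs the next move; the final connection is deleted at the end, which is harmless since~$F'$ is already totally connected, and the whole procedure is repeated pair of colors by pair of colors. Your phrase ``remove any now-redundant clasps/tubes, or keep them if they are needed later'' gestures at this, but without the explicit swap the claim that the conjugated move is of the same type does not follow, and this swap is the actual content of the proof.

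Two smaller points. Clasp creation is not the inverse of a (T3) move: (T3) and its inverse both preserve the set of clasps; the connection is created by a (T2) move, which is what the paper uses. Also, your normalization of disconnected surfaces by (T1) ``tubing'' is not how the paper proceeds: instead of quoting Theorem~1.3 of~\cite{DMO21} wholesale, it first invokes Lemma~4.2 of~\cite{DMO21} to replace~$F$ and~$F'$ by C-complexes whose surfaces of each color are connected and isotopic, using only isotopies and handle attachments, moves which automatically preserve connectedness and total connectivity, and then Propositions~4.3 and~4.6 of~\cite{DMO21} to obtain the alternating sequence of isotopies and (T2)/(T3)/(T4) moves; only the loss of total connectivity caused by inverse (T2) and by (T3) moves then requires the connection argument above.
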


\begin{figure}[h]
\centering
\begin{overpic}[width=6.5cm]{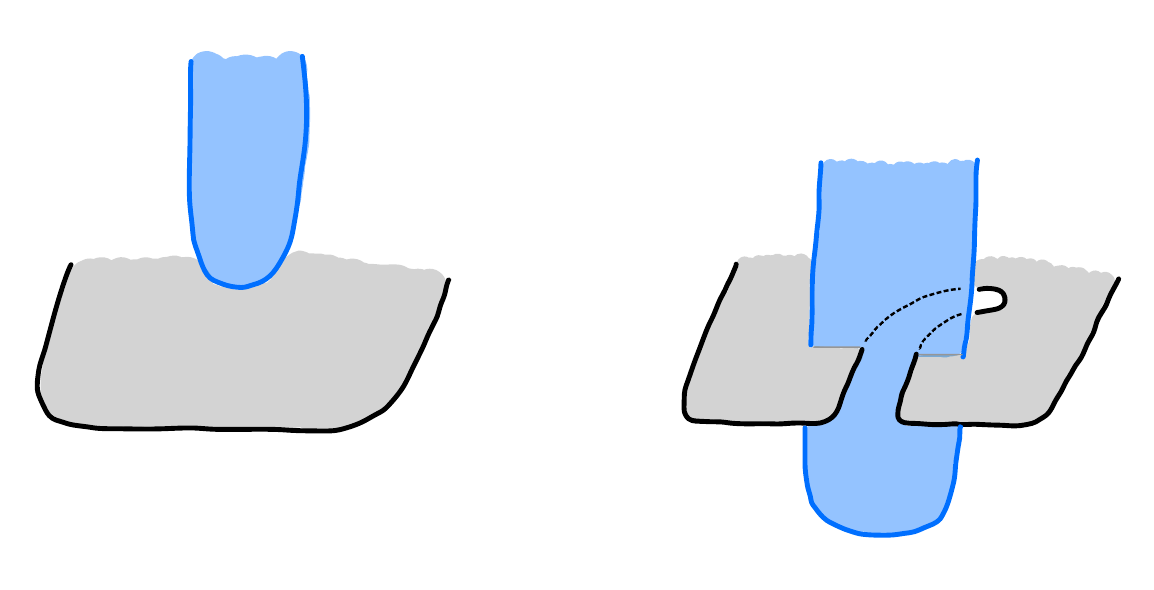}
\put(45,20){$\longrightarrow$}
\put(44,25){$\mathrm{(T2)}$}
\end{overpic}
\hskip1.5cm
\begin{overpic}[width=6.5cm]{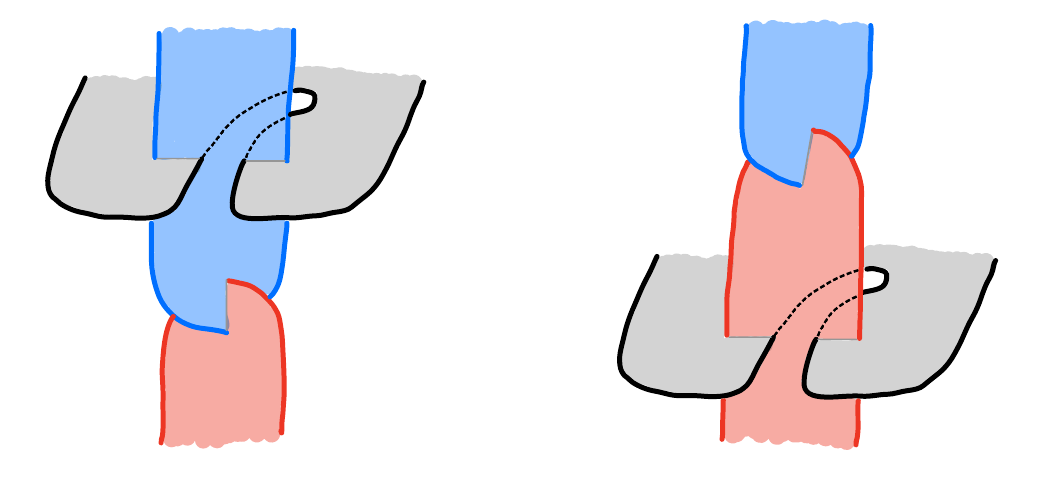}
\put(45,20){$\longrightarrow$}
\put(44,25){$\mathrm{(T3)}$}
\end{overpic}
\caption{The movements~(T2) and~(T3).}
\label{T23}
\end{figure}

\begin{figure}[h]
\centering
\begin{overpic}[width=10cm]{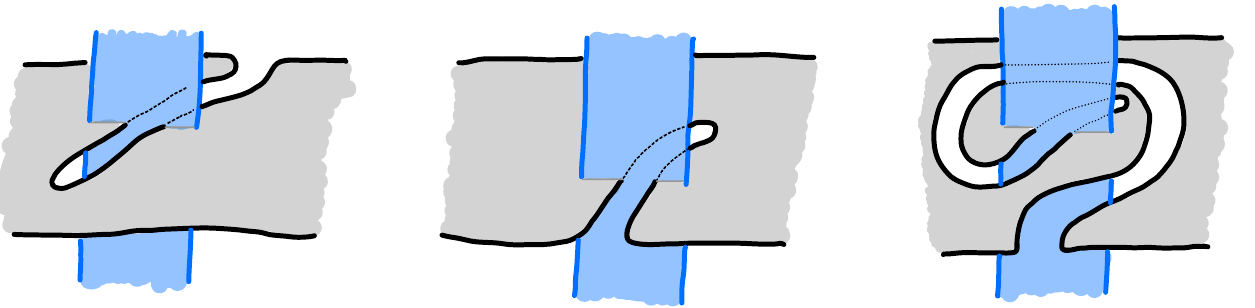}
\put(28,10){$\longleftarrow$}
\put(27,13.5){$\mathrm{(T4_a)}$}
\put(67,10){$\longrightarrow$}
\end{overpic}
\caption{Examples of~(T4) movements. }
\label{T4}
\end{figure}

\subsection{Generalized Seifert matrices}
\label{sub:GSM}

C-complexes allow to define {\em generalized Seifert forms\/} as follows.
For any sign~$\varepsilon=(\varepsilon_1,\dots,\varepsilon_m)\in\{\pm\}^m$, let
\[
\alpha^\varepsilon\colon H_1(F)\times H_1(F)\longrightarrow\mathbb{Z}
\]
be the bilinear form given by~$\alpha^\varepsilon(x,y)=\lk(x^\varepsilon,y)$,
where~$x^\varepsilon\subset S^3\setminus F$ denotes a cycle representating the homology class~$x\in H_1(F)$ pushed in the~$\varepsilon_i$-normal direction off~$F_i$ for all~$i=1,\dots,m$.
For this to make sense, one needs this cycle to be well-behaved when crossing clasps, as illustrated in
the right-hand part of Figure~\ref{clasp-regular}.
This is not an issue, as any homology class in~$H_1(F)$ can be represented by
such well-behaved cycles.
We denote by~$A^\varepsilon=A_F^\varepsilon$ the corresponding {\em generalized Seifert matrices\/}, defined with respect to a fix basis of~$H_1(F)$. Note the equality~$A^{-\varepsilon}=(A^\varepsilon)^{\T}$ for all~$\varepsilon\in\{\pm\}^m$.

Note that in the case~$m=1$, a C-complex for a~1-colored link~$L$ is nothing but a Seifert surface for
the oriented link~$L$, a matrix~$A^-$ corresponding to~$\varepsilon=-$ is a standard Seifert matrix,
and~$A^+$ its transpose.

\medskip

As recalled in Section~\ref{sub:Arf-L}, two Seifert matrices for isotopic oriented links are related by
a sequence of moves generating an equivalence relation known as S-equivalence.
Since two \dc{totally connected} C-complexes~$F,F'$ for isotopic colored links~$L,L'$ are related by the sequence of moves
of Lemma~\ref{S-equivalence}, the corresponding generalized Seifert matrices~$A^\varepsilon_{F},A^\varepsilon_{F'}$ are related by a sequence of transformations
(which in general depend on~$\varepsilon$).
\dc{Such computations were performed in~\cite{Cim04,DMO21} for general C-complexes, but we need to
consider the restrictive case of totally connected ones, yielding slightly different results.}

\dc{Once again, in order not to include technical statements in this background section, we differ
these computations to Appendix~\ref{sec:appendix}.}

\section{Arf invariants of colored links}
\label{sec:main}

The goal of this section is to explain how the classical Arf invariant defined in Section~\ref{sub:Arf-L} can be extended to colored links, i.e.\ to prove Theorem~\ref{thm:intro}. More precisely, we start in Section~\ref{sub:main}
with a discussion leading to the detailed statement of our main result. Its proof is given in Section~\ref{sub:ind} where we check independence of the C-complex.

\subsection{Main result}
\label{sub:main}

From now on, we work over the field~$\Z_2$ of modulo~2 integers, that will often be omitted from the notation.
For example, given a C-complex~$F$ for an~$m$-colored link~$L$ and a sign~$\varepsilon\in\{\pm\}^m$,
we still denote by~$\alpha^\varepsilon=\alpha_F^\varepsilon$ the form on~$H_1(F;\Z_2)$ given by the modulo~2 reduction of
the generalized Seifert form, and by~$A^\varepsilon=A_F^\varepsilon$ the modulo~2 reduction of a corresponding generalized
Seifert matrix.

To define a (generalized) Arf invariant of~$L$, we need a quadratic form.
For any~$\varepsilon\in\{\pm\}^m$, let us denote by~$q_F^\varepsilon$
(or by~$q^\varepsilon$ when no confusion is possible) the map
\[
q_F^\varepsilon\colon H_1(F;\Z_2)\to\Z_2
\]
defined by~$q_F^\varepsilon(x)=\alpha^\varepsilon_F(x,x)=\lk(x^\varepsilon,x)$.
As in the classical case, we have
\[
q_F^\varepsilon(x+y)+q_F^\varepsilon(x)+q_F^\varepsilon(y)=\lk(x^\varepsilon,y)+\lk(y^\varepsilon,x)=\lk(x^\varepsilon+x^{-\varepsilon},y)=:B_F^\varepsilon(x,y)
\]
for any~$x,y\in H_1(F;\mathbb{Z}_2)$. Since~$B_F^\varepsilon$ a bilinear form (with matrix~$A_F^\varepsilon+(A_F^\varepsilon)^{\T}$), we see that the map~$q_F^\varepsilon$ is a quadratic form. 
More generally, given any subset~$E$ of~$\{\pm\}^m$, the map
\[
q_F^E:=\sum_{\varepsilon\in E}q_F^\varepsilon\colon H_1(F;\Z_2)\to\Z_2
\]
is a quadratic form, with bilinear form~$B_F^E:=\sum_{\varepsilon\in E}B_F^\varepsilon$
admitting the matrix $A_F^E+(A_F^E)^{\T}$, with $A_F^E:=\sum_{\varepsilon\in E}{A_F^\varepsilon}$.

As explained in Section~\ref{sub:Arf-q}, there are now three possibilities for the quadratic form~$q_F^E$:
it might vanish on~$\rad(B_F^E)$, yielding the possible values~$\Arf(q_F^E)=0$ and~$\Arf(q_F^E)=1$;
or it might not vanish on~$\rad(B_F^E)$, yielding the third ``value'', namely~$\Arf(q_F^E)$ undefined.
This may or may not yield an invariant of~$L$, i.e.\ be independent of the choice of~$F$.
If this is the case, then one can set
\[
\Arf_{\!E}(L):=\Arf(q_F^E)\,,
\]
which might take the three values~$0,1$ or undefined.

For example, the empty set~$E=\emptyset$ yields the trivial (non-singular) quadratic form 
on the trivial space, leading to a well-defined, yet identically zero invariant.
On the other hand, for~$m=1$, both choices~$E=\{+\}$ and~$E=\{-\}$ lead to the classical Arf invariant~$\Arf_{\!E}(L)=\Arf(L)$, undefined unless~$\lk(K,L\setminus K)$ is even for each component~$K$ of~$L$, as explained in Section~\ref{sub:Arf-L}.

The remaining cases are covered by the following reformulation of Theorem~\ref{thm:intro} together with
the ensuing remark.

\begin{theorem}
\label{thm:main}
Let~$L$ be an~$m$-colored link. Given a totally connected C-complex~$F$ and a subset~$E$ of~$\{\pm\}^m$, let~$q_F^E\colon H_1(F;\Z_2)\to\Z_2$ be the associated quadratic form. If~$E$ is of even cardinality, then the value of $\Arf(q_F^E)$,~$0, 1$, or undefined, does not depend on the choice of~$F$.
\end{theorem}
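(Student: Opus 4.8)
The plan is to establish invariance by checking that the ``value'' of $\Arf(q_F^E)$ (that is, $0$, $1$, or undefined) is unchanged under each of the elementary moves (T0)--(T4) of Lemma~\ref{S-equivalence}, since any two totally connected C-complexes for isotopic colored links are related by a finite sequence of these. The move (T0) is trivial, as an ambient isotopy induces an isomorphism of the pairs $(H_1(F;\Z_2),q_F^E)$. For (T1), a handle attachment along one of the surfaces~$F_i$, the effect on the generalized Seifert matrices~$A_F^\varepsilon$ is the usual S-equivalence-type enlargement: one adds two new basis elements, one of which, say~$e$, is a small meridian-type cycle with $\alpha_F^\varepsilon(e,\cdot)=0$ for all~$\varepsilon$ and hence $q_F^\varepsilon(e)=0$ for all~$\varepsilon$, so $q_F^E(e)=0$, while the other new element~$f$ pairs with~$e$ via $B_F^E(e,f)=1$ (independently of~$E$, using $B_F^E$'s matrix $A_F^E+(A_F^E)^{\mathsf T}$). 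Thus the new hyperbolic pair $\langle e,f\rangle$ is a non-singular summand on which $q_F^E(e)=0$, which changes neither whether $q_F^E$ vanishes on its radical nor (when it does) the Arf invariant, by formula~\eqref{formularf} and additivity of Arf under orthogonal sum.

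For the moves (T2), (T3), (T4), I would invoke the explicit matrix transformations computed in Appendix~\ref{sec:appendix}: each of these changes~$A_F^\varepsilon$ in a way that depends on~$\varepsilon$, but the point is that it changes~$A_F^E=\sum_{\varepsilon\in E}A_F^\varepsilon$ (and hence $q_F^E$ and $B_F^E$) only in a controlled way when~$|E|$ is even. Concretely, I expect that for (T2) and (T3) the new cycle~$e$ created picks up, for each single~$\varepsilon$, a contribution to $q_F^\varepsilon(e)$ of the form (something like) $\varepsilon_i\varepsilon_j$ or a linking number times such a sign, so that $q_F^E(e)=\sum_{\varepsilon\in E}q_F^\varepsilon(e)$ is a sum over~$E$ of expressions that are odd functions of some~$\varepsilon_i$; summing an odd function of~$\varepsilon_i$ over any set~$E$ that is symmetric, or more relevantly, using that $|E|$ is even forces the sign-dependent parts to cancel mod~$2$ (since $\sum_{\varepsilon\in E}\varepsilon_i\varepsilon_j \equiv |E|\cdot(\text{something})$ or telescopes pairwise). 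This is exactly the mechanism flagged in Remark~\ref{rem:Eodd}: for odd~$E$ the leftover sign-dependent term survives and breaks invariance. So for even~$E$, the new pair behaves as in the (T1) case — a hyperbolic summand with $q_F^E$ vanishing on the distinguished generator — and the trichotomy is preserved. Move (T4) replaces a push along one arc by a push along another; here the change in each~$\alpha_F^\varepsilon$ is a change of basis together with a sign-weighted correction, and again the~$E$-summed form $q_F^E$ differs by an isomorphism (a transvection or a basis change) precisely when the sign-dependent correction terms cancel, which happens for even~$|E|$.

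The main obstacle I anticipate is the bookkeeping for (T2)--(T4): one must track how the new or modified basis cycles interact with the clasp structure and compute $\alpha_F^\varepsilon$ on them for every~$\varepsilon$, then verify that the resulting expressions are genuinely odd in the relevant sign variable so that the even-cardinality hypothesis produces the needed cancellation modulo~$2$. The cleanest way to organize this is probably to isolate, for each move, a single ``local'' homology class affected by the move and prove a small lemma: \emph{after the move, there is a basis of the new $H_1(F;\Z_2)$ extending (the image of) a basis of the old one by classes $e_1,f_1,\dots$ (possibly none) such that $q_F^E(e_k)=0$ and $B_F^E(e_k,f_\ell)=\delta_{k\ell}$, $B_F^E(e_k, x)=0$ for $x$ in the old part} — whenever $|E|$ is even. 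Granting this lemma for each move, invariance of the trichotomy is immediate from the structure theory recalled in Section~\ref{sub:Arf-q}: restricting/extending by such hyperbolic pairs changes neither the vanishing of $q$ on $\rad(B)$ nor $\Arf$. I would also double-check the degenerate subcase where a move creates \emph{no} new homology (e.g.\ certain instances of (T3) or (T4)), in which case $q_F^E$ simply transforms by an isomorphism and there is nothing to prove.
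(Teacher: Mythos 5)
Your overall plan --- checking the trichotomy under the moves (T0)--(T4) of Lemma~\ref{S-equivalence} using the matrix formulas of Appendix~\ref{sec:appendix} --- is the natural route and essentially the paper's (the paper only streamlines it by introducing an auxiliary move~(T5) that subsumes (T2) and (T3) and is used to reduce (T4) to the special case~$(\mathrm{T4_a})$, where the diagonals agree and~$B^\varepsilon_F=B^\varepsilon_{F'}$ for every single~$\varepsilon$). However, the mechanism you propose is not what actually happens, and your ``small lemma'' is false as stated. Already for (T1): by Equation~\eqref{eq:T1} the new meridian-type cycle~$x$ does \emph{not} satisfy~$\alpha^\varepsilon(x,\cdot)=0$ for all~$\varepsilon$ (it pairs with the other new cycle~$y$ by~$-\delta(\varepsilon_i,\sigma_i)$), and after summing over~$E$ one gets~$B^E_{F'}(x,y)=n_i^{\sigma_i}+n_i^{-\sigma_i}=|E|\equiv 0\pmod 2$: precisely because~$|E|$ is even, the new pair is \emph{not} hyperbolic but lies in~$\rad(B^E_{F'})$. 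In that situation~$q^E(x)=0$ alone is not enough; one must check that~$q^E$ vanishes on \emph{both} new radical generators (it does, since the~$(y,y)$ entry sums to~$|E|\ell\equiv 0$), as otherwise the ``undefined'' branch of the trichotomy could be created or destroyed.

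For (T2)/(T3) the situation genuinely splits into two cases governed by the parity of~$n_\text{e}=n_{ij}^{\sigma_i\sigma_j}+n_{ij}^{-\sigma_i-\sigma_j}$ (Notation~\ref{not:n}); evenness of~$|E|$ only forces~$n_\text{e}$ and~$n_\text{o}$ to have the same parity, it does \emph{not} make the sign-dependent terms cancel. When~$n_\text{e}$ is even one is again in the radical-extension case, and must verify that~$q^E$ vanishes on both new classes. When~$n_\text{e}$ is odd the new pair~$(x,y)$ \emph{is} hyperbolic, but the cancellation you predict fails: the diagonal term~$q^E_{F'}(y)=n_\text{e}\ell(\sigma_i,\sigma_j)+n_\text{o}\ell(\sigma_i,-\sigma_j)$ can well be~$1$. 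Invariance holds for a different reason: one has~$q^E_{F'}(x)=0$, the off-diagonal block~$\xi^E$ pairing the old part with~$y$ is killed by transvections~$a_k\mapsto a_k+x$ (which preserve the~$q^E$-values since~$q^E(x)=0$ and~$B^E(a_k,x)=0$), and then the pair contributes~$q^E(x)q^E(y)=0$ to Formula~\eqref{formularf}. So the even-cardinality hypothesis does not produce the uniform picture your lemma asserts (hyperbolic pair with~$B^E(e,f)=1$ ``independently of~$E$'' and~$q^E(e)=0$); it produces a dichotomy --- a pair absorbed into the radical with~$q^E$ vanishing on it, or a hyperbolic pair with~$q^E$ vanishing on one leg only --- and your proposal asserts the wrong alternative in each instance. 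This is a genuine gap occurring exactly at the step where the hypothesis on~$E$ is used; the bookkeeping you flag as the main obstacle is where the content lies, and it does not come out the way you anticipate, though the corrected case analysis is within reach of your general strategy.
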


\begin{remark}
\label{rem:Eodd}
If~$E$ has odd cardinality, then the conclusion of the theorem does not hold unless we are in the classical case of~$m=1$.

Let us first sketch why, if~$E$ consists of a singleton~$\{\varepsilon\}$,
any~$m$-colored link~$L$ with~$m>1$ admits two totally connected C-complexes~$F$ and~$F'$ leading to quadratic forms~$q^E_F$
and~$q^E_{F'}$ with different Arf invariants.
Indeed, let us consider an arbitrary C-complex~$F$  for~$L$, and let~$F(\varepsilon)$ denote the oriented
surface obtained from~$F$ by applying the transformation illustrated in Figure~\ref{transformation} to each clasp.
There is a canonical isomorphism~$H_1(F)\simeq H_1(F(\varepsilon))$ yielding a congruence
between the generalized Seifert form~$\alpha^\varepsilon$ on~$F$ and the classical Seifert
form~$\alpha^-$ on~$F(\varepsilon)$. As a consequence, we have
\[
\Arf(q_F^E)=\Arf(q_F^\varepsilon)=\Arf(q_{F(\varepsilon)})=\Arf(\partial F(\varepsilon))\,,
\]
the classical Arf invariant of the oriented link~$\partial F(\varepsilon)$.
Now, applying this transformation to two C-complexes~$F$ and~$F'$ related by a
move~(T2) yields two links~$\partial F(\varepsilon)$ and~$\partial F'(\varepsilon)$
which are related by a band move and the addition of an unknot,
see Figure~\ref{odd}.
This process enables us to produce two totally connected C-complexes for~$L$ having Arf invariants with different values.

\begin{figure}
\centering
\begin{overpic}[width=7cm]{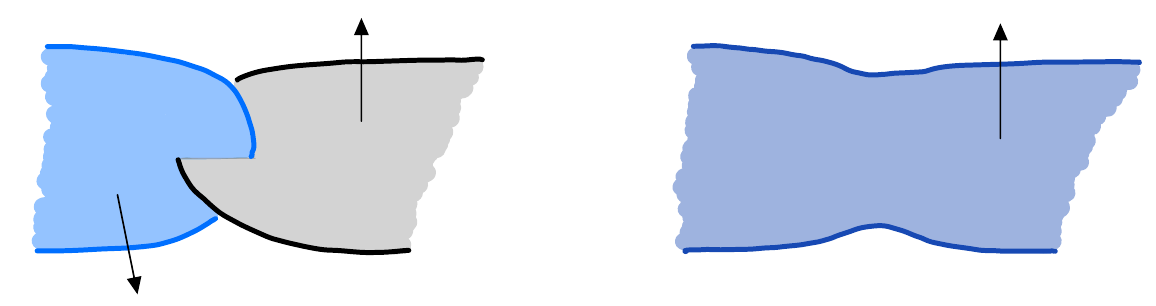}
\put(10,15){$F_i$}
\put(29,10){$F_j$}
\put(70,13){$F(\varepsilon)$}
\put(30,27){$\varepsilon_j$}
\put(12,-3){$\varepsilon_i$}
\put(48,14){$\longrightarrow$}
\end{overpic}
\caption{Transforming a C-complex~$F$ into an oriented surface~$F(\varepsilon)$.}
\label{transformation}
\end{figure}

\begin{figure}
\centering
\begin{overpic}[width=0.8\linewidth]{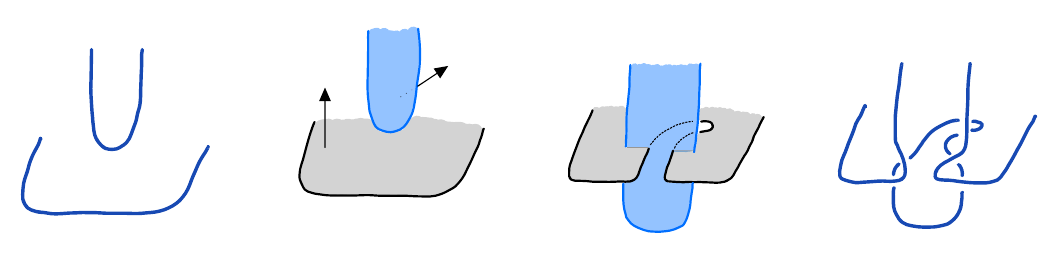}
\put(69,19){$F'$}
\put(29,23){$F$}
\put(-2,18){$\partial F(\varepsilon)$}
\put(99,16){$\partial F'(\varepsilon)$}
\put(30,18.5){$\varepsilon_j$}
\put(43.5,19){$\varepsilon_i$}
\put(74,13.5){$\longrightarrow$}
\put(47.5,13.5){$\longrightarrow$}
\put(22,13){$\longleftarrow$}
\put(47,16){$\mathrm{(T2)}$}
\end{overpic}
\caption{The links~$\partial F(\varepsilon)$ and~$\partial F'(\varepsilon)$
obtained via~$F$ and~$F'$ related by a move~(T2).}
\label{odd}
\end{figure}

The general case of~$E$ with odd cardinality is slightly more technical.
As it turns out, one can show that any totally connected C-complex~$F_0$ such that~$\Arf(q_{F_0}^E)$ is defined
produces a link admitting two totally connected C-complexes~$F$ and~$F'$ with exactly one of $\Arf(q^E_F)$ and~$\Arf(q^E_{F'})$ defined. However, the details are rather cumbersome and will not be included in this note.
\end{remark}

\begin{remark}
\label{rem:c-ex}
In general, Theorem~\ref{thm:main} does not hold either without the assumption of total connectivity.
For example, consider the two C-complexes~$F$ and~$F'$ illustrated in Figure~\ref{c-ex} (left): they
are related by a move~(T3), recall Figure~\ref{ST3}, and in particular yield isotopic colored links.
However, the C-complex~$F'$ is not totally connected.
Assuming that the components are ordered blue-black-red and that the left of Figure~\ref{c-ex}
shows the positive sides of the three surfaces, the choice~$E=\{+++,++-\}$ leads to a first quadratic form~$q^E_F$ which vanishes on~$\rad(B^E_F)$, while the second quadratic form~$q^E_{F'}$ does not vanish on~$\rad(B^E_{F'})$.
Hence, the value of~$\Arf(q_F^E)$ in general does depend on the choice of the C-complex~$F$ if is allowed not to be totally connected.
\end{remark}

\begin{figure}
\centering
\begin{overpic}[width=0.4\linewidth]{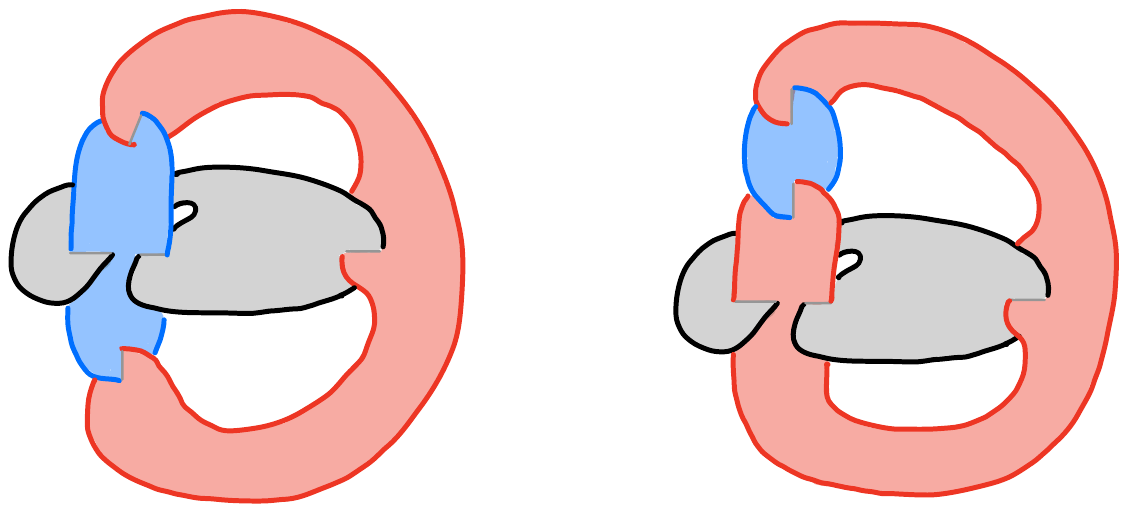}
\put(60,38){$F'$}
\put(0,38){$F$}
\end{overpic}
\hskip1.2cm
\begin{overpic}[width=0.5\linewidth]{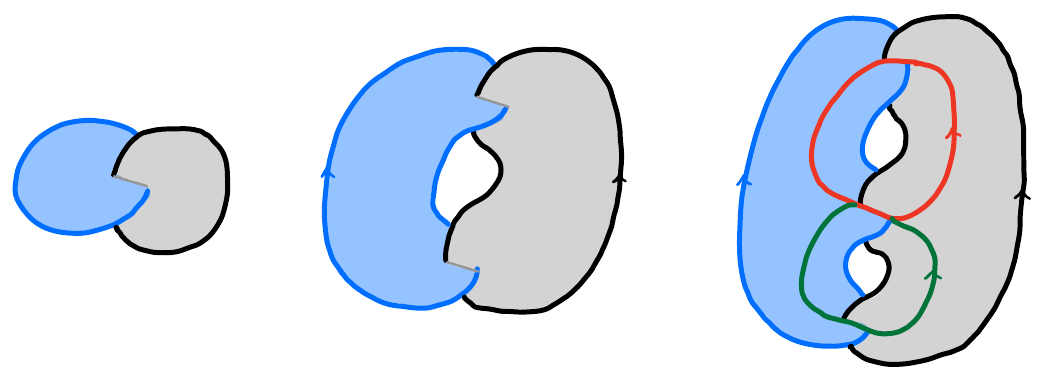}
\put(30,28){$F'$}
\put(69,29){$F''$}
\put(10,28){$F$}
\end{overpic}
\caption{Left: the counterexample of Remark~\ref{rem:c-ex}. Right: the links and C-complexes of Example~\ref{ex:Arf}}
\label{c-ex}
\end{figure}

We now present a couple of easy examples.

\begin{example}
\label{ex:Arf}
Let us focus on the case of~$2$ colors. Because of the equality~$A^{-\varepsilon}=(A^\varepsilon)^{\T}$, a
choice of~$E$ in~$\{\{++,--\},\{+-,-+\},\{++,+-,-+,--\}\}$ yields a trivial invariant $\Arf_{\!E}\equiv 0$,
and the same obviously holds for~$E=\emptyset$. For the same reason, we have~$\Arf_{\!E}\equiv\Arf_{\!-E}$,
where~$-E$ denotes the set~$E$ with all signs changed. Hence, we are left with~$E=\{++,+-\}$ and~$\{++,-+\}$
yielding potentially non-trivial invariants.

First, consider the~2-colored link~$L$ given by the (positive or negative) Hopf link. As illustrated in the right of Figure~\ref{c-ex}, it admits a contractible C-complex~$F$; therefore, we get~$\Arf_{\!E}(L)=0$.

Then, let~$L'$ denote the~2-colored link given by the~$(2,4)$-torus link. The C-complex~$F'$
illustrated in the right of Figure~\ref{c-ex} leads to~$A^{++}=A^{--}=(-1)$ and~$A^{+-}=A^{-+}=(0)$. Hence, the signs~$E=\{++,+-\}$ and~$\{++,-+\}$ give the
matrices~$A^E=(1)$ and~$B^E=(0)$, so we get~$\Arf_{\!E}(L')$ undefined.

Finally, let~$L''$ denote the~$(2,6)$-torus link.
The C-complex~$F''$ together with the basis of~$H_1(F'')$ illustrated in the right of Figure~\ref{c-ex} yield the generalized Seifert matrices~$A^{++}=(A^{--})^{\T}=\left(\begin{smallmatrix}-1&0\\ -1&-1\end{smallmatrix}\right)$ and~$A^{+-}=A^{-+}=\left(\begin{smallmatrix}0&0\\ 0&0\end{smallmatrix}\right)$.
Both choices~$E=\{++,+-\}$ and~$\{++,-+\}$ lead to the
matrices~$A^E=\left(\begin{smallmatrix}1&0\\ 1&1\end{smallmatrix}\right)$ and~$B^E=\left(\begin{smallmatrix}0&1\\ 1&0\end{smallmatrix}\right)$, so we get~$\Arf_{\!E}(L'')=1$, for example using Equation~\eqref{formularf}.
\end{example}

\subsection{Proof of Theorem \ref{thm:main}}
\label{sub:ind}

Let us fix an~$m$-colored link~$L$ with~$m>1$ and a subset~$E\subset\{\pm\}^m$ of even cardinality. 
We want to show that if~$F$ and~$F'$ are two totally connected C-complexes for~$L$, then the values of~$\Arf(q^E_F)$ and of~$\Arf(q^E_{F'})$ coincide.

\dc{The most natural and direct way to show this result is to use the four transformations involved in the generalized S-equivalence of Appendix~\ref{sec:appendix}.
Instead, we will make use of the following lemma which introduces a new move~(T5):
this allows to reduce the number of transformations from four to three, thus yielding a slightly shorter proof
of Theorem~\ref{thm:main}. Note however that this new move might change the isotopy type of the underlying colored link, and can therefore not be used for generalized S-equivalence.}

\begin{lemma}
Let $F$ and $F'$ be two totally connected C-complexes and $E\subset\{\pm\}^m$ of even cardinality. If $F$ and $F'$ are related by the move~(T5) described in Figure~\ref{T5}, then the values of~$\Arf(q_F^E)$ and of~$\Arf(q^E_{F'})$ coincide.
\label{T5invariance}
\end{lemma}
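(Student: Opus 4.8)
The plan is to understand precisely how the move~(T5) affects a generalized Seifert matrix $A_F^\varepsilon$, for each fixed sign $\varepsilon\in\{\pm\}^m$, and then to sum over $\varepsilon\in E$ and exploit the even cardinality of $E$. Looking at Figure~\ref{T5}, the move~(T5) should be a local modification near a clasp or a push-region that affects the homology of the C-complex by at most a bounded amount; I expect that the effect on $H_1(F;\Z_2)$ is either trivial (the move preserves the homology and merely alters $A_F^\varepsilon$ by a congruence, possibly together with the addition of a hyperbolic summand) or it changes a basis cycle by adding a new generator whose self-linking $\lk(x^\varepsilon,x)$ depends on $\varepsilon$ in a way that cancels when we sum over an even set $E$. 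Concretely, I would first write $A_{F'}^\varepsilon$ in block form relative to a natural basis of $H_1(F';\Z_2)$ adapted to the local picture, expressing it in terms of $A_F^\varepsilon$ together with some extra rows and columns recording the linking numbers of the new cycle(s) created by~(T5) with the old ones and with themselves.

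The key point is that the diagonal entry of this extra block — the value $q_{F'}^\varepsilon$ takes on the new cycle — is the only place where a genuinely $\varepsilon$-dependent contribution can survive, since $B_F^\varepsilon$ only depends on $A_F^\varepsilon+(A_F^\varepsilon)^{\T}$ and the symmetrization kills the asymmetry. When we form $q_{F'}^E=\sum_{\varepsilon\in E}q_{F'}^\varepsilon$, the contribution of the new cycle is $\sum_{\varepsilon\in E}\lk(x^\varepsilon,x)$; I expect the geometry of~(T5) to force each summand $\lk(x^\varepsilon,x)$ to be constant in $\varepsilon$ (or to depend only on a single coordinate $\varepsilon_i$ in a controlled way), so that the sum over $|E|$ even signs vanishes, or else to differ from the ``honest'' value by exactly $|E|\bmod 2=0$. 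Thus $q_{F'}^E$ restricted appropriately reduces to $q_F^E$ possibly plus a hyperbolic (metabolic) summand of the standard form $x\mapsto xy$, which changes neither the non-singularity status nor the Arf invariant: a hyperbolic plane contributes $0$ to formula~\eqref{formularf}, and adjoining it to the radical does not spoil vanishing of $q$ on $\rad$. So the three-way value of $\Arf(q^E)$ is unchanged.

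The main obstacle I anticipate is bookkeeping: correctly identifying, from Figure~\ref{T5}, which cycles in $H_1(F';\Z_2)$ are ``new'' versus ``old'', choosing well-behaved representatives that cross the clasps correctly (as in the right of Figure~\ref{clasp-regular}) so that $\lk(x^\varepsilon,y)$ is actually computable, and verifying that the off-diagonal linking numbers of the new cycle with the old basis are such that the extra block is genuinely metabolic after the $\varepsilon$-sum. A secondary subtlety is the case in which~(T5), being allowed to change the isotopy type of $L$, nonetheless must not change $\Arf(q^E)$ — this is consistent only because the linking-number data that governs the trichotomy (by Proposition~\ref{prop:intro}) is itself preserved by~(T5), and it would be cleanest to check directly that the relevant linking numbers $\lk(L_i,L_j)$ and the parities controlling $\rad(B^E)$ are untouched. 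Once the local matrix computation is in hand, the conclusion follows immediately from the discussion of the trichotomy in Section~\ref{sub:Arf-q} and the congruence-invariance of the Arf invariant expressed in~\eqref{formularf}.
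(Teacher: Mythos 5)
Your overall strategy --- write $A^\varepsilon_{F'}$ in block form with respect to a basis of $H_1(F';\Z_2)$ obtained by adjoining the cycles created by~(T5), sum over $\varepsilon\in E$, and exploit the parity of $\vert E\vert$ --- is exactly the route the paper takes, and most of the bookkeeping you flag does work out. But the cancellation your plan hinges on fails in general. The move~(T5) creates \emph{two} new generators $x',y'$ (not one), and while $\alpha^\varepsilon_{F'}(x',x')=0$ for every $\varepsilon$, the self-linking of $y'$ is an integer $\ell(\varepsilon_i,\varepsilon_j)$ depending on $(\varepsilon_i,\varepsilon_j)$, subject only to $\ell(\varepsilon_i,\varepsilon_j)=\ell(-\varepsilon_i,-\varepsilon_j)$. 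Writing $n_{\mathrm{e}}$ for the number of $\varepsilon\in E$ with $(\varepsilon_i,\varepsilon_j)=\pm(\sigma_i,\sigma_j)$ and $n_{\mathrm{o}}$ for the complementary count, one finds $q^E_{F'}(y')=n_{\mathrm{e}}\,\ell(\sigma_i,\sigma_j)+n_{\mathrm{o}}\,\ell(\sigma_i,-\sigma_j)\bmod 2$; evenness of $\vert E\vert$ only gives $n_{\mathrm{e}}\equiv n_{\mathrm{o}}\pmod 2$, not that each is even, so when $n_{\mathrm{e}}$ is odd this value can perfectly well be $1$. Hence your claim that the $\varepsilon$-sum kills the new diagonal entries, so that the extra summand is ``the standard form $x\mapsto xy$'', is precisely the step that breaks down in the nontrivial case.

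The correct argument requires the dichotomy on the parity of $n_{\mathrm{e}}$. If $n_{\mathrm{e}}$ is even, then indeed $B^E_{F'}=B^E_F\oplus 0$, the two new generators lie in $\rad(B^E_{F'})$ and $q^E_{F'}$ vanishes on them: this is the scenario you describe, handled as for a move~(T1). If $n_{\mathrm{e}}$ is odd, $B^E_{F'}$ acquires a hyperbolic plane spanned by $x'$ and $y'$ (after adjusting the old symplectic basis vectors by possibly adding $x'$, which is harmless because $B^E_{F'}(a,x')=0$ and $q^E_{F'}(x')=0$), the radical is unchanged, and the restriction of $q^E_{F'}$ to $H_1(F;\Z_2)$ equals $q^E_F$, so the ``undefined'' alternative matches as well; the Arf invariant is then preserved not because $q^E_{F'}(y')$ vanishes --- it need not --- but because the new symplectic pair contributes $q^E_{F'}(x')\,q^E_{F'}(y')=0$ to formula~\eqref{formularf}, using only $q^E_{F'}(x')=0$. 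Finally, your fallback of invoking preservation of linking numbers together with Proposition~\ref{prop:intro} would be circular, since that proposition is deduced from the invariance being proved here; fortunately it is not needed.
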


\begin{figure}
\centering
\begin{overpic}[width=0.6\linewidth]{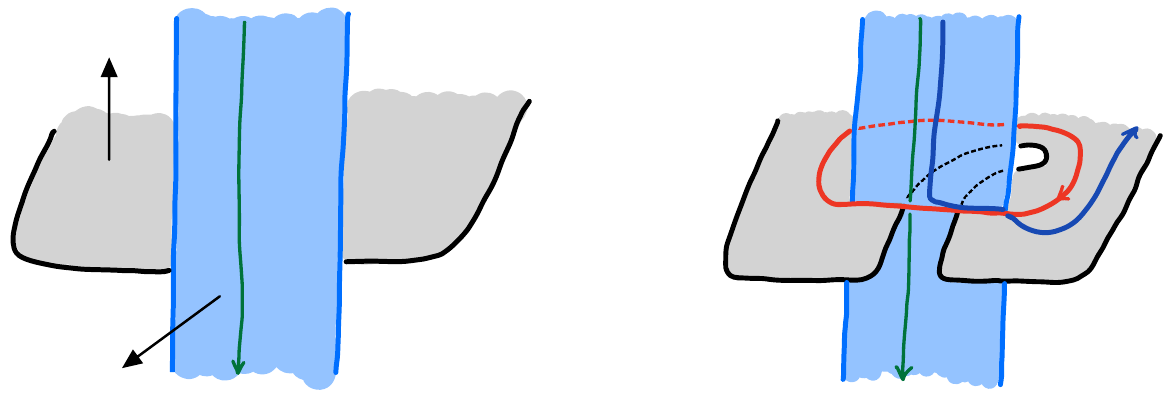}
\put(65,27){$F'$}
\put(40,30){$F$}
\put(8,1){$\sigma_i$}
\put(9,31){$\sigma_j$}
\put(20,-1){$z$}
\put(77,-1){$z'$}
\put(68,13){$x'$}
\put(97,25){$y'$}
\put(50,18){$\longrightarrow$}
\end{overpic}
\caption{The move~(T5).}
\label{T5}
\end{figure}

We will first introduce some notation, then prove Theorem~\ref{thm:main} assuming Lemma \ref{T5invariance},
and finally check this lemma.

\begin{notation}
\label{not:n}
Given~$E\subset\{\pm\}^m$ and a fixed color~$i$ and sign~$\sigma$, we write~$n_i^\sigma$ for the cardinality of~$\{\varepsilon\in E\,|\,\varepsilon_i=\sigma\}$.
Similarly, given signs~$\sigma,\tau$ and colors~$i,j$, we
denote by~$n_{ij}^{\sigma\tau}$ the cardinality of~$\{\varepsilon\in E\,|\,\varepsilon_i=\sigma,\,\varepsilon_j=\tau\}$.
\end{notation}

\begin{proof}[Proof of Theorem~\ref{thm:main}.]
By Lemma~\ref{S-equivalence}, we only need to show that if a totally connected C-complex~$F'$ is obtained from another totally connected C-complex~$F$ by one of the movements (T0)--(T4), then the values~$\Arf(q^E_{F})$ and~$\Arf(q^E_{F'})$ coincide.

Since~(T0) is an ambient isotopy, the equality is automatically satisfied. The moves~(T2) and~(T3) being compositions of~(T0) and~(T5), Lemma \ref{T5invariance} implies that they do not change the value of the Arf invariant. 

Let us now suppose that, for some~$1\le i\le m$ and~$\sigma_i\in\{\pm\}$, the C-complex~$F'$ is obtained from~$F$ by a move~(T1) on the~$\sigma_i$-side of~$F_i$.
By Equation~\eqref{eq:T1}, the sums of generalized Seifert matrices~$A^E_F=\sum_{\varepsilon\in E} A^\varepsilon_F$ and~$A^E_{F'}=\sum_{\varepsilon\in E} A^\varepsilon_{F'}$ are related by
\[
A_{F'}^E=\begin{pmatrix}
A^E_F & 0 & n_i^{\sigma_i}\xi(\sigma_i)+n_i^{-\sigma_i}\xi(-\sigma_i)\\
0 & 0 & n_i^{\sigma_i}\\
n_i^{\sigma_i}\xi(-\sigma_i)^{\T}+n_i^{-\sigma_i}\xi(\sigma_i)^{\T} & n_i^{-\sigma_i} & 0
\end{pmatrix}\,,
\]
assuming Notation~\ref{not:n}.
Since $E$ is of even cardinality, the integers~$n_i^{\sigma_i}$ and~$n_i^{-\sigma_i}$ have the same parity, so the associated bilinear forms satisfy~$B_{F'}^E = B_F^E \oplus\left(\begin{smallmatrix}0&0\\ 0&0\end{smallmatrix}\right)$.
In other words, we have~$\rad(B_{F'}^E)=\rad(B_F^E)\oplus\Z_2 x\oplus\Z_2 y$, with~$x,y$ corresponding to
the final two rows and columns in~$A_{F'}^E$. Since~$q^E_{F'}(x)=q^E_{F'}(y)=0$, the Arf invariants of~$q^E_{F'}$ and of~$q^E_{F}$ have the same value.

Finally, the move~(T4) can be obtained as follows: first use an ambient isotopy (twisting the vertical band)
to get the arc into one of the two situations illustrated in the left and center of Figure~\ref{T4}, then (if needed) use the inverse of~$\mathrm{(T4_a)}$ to get to the center of this figure,
then move~(T5), then an ambient isotopy to untwist the vertical band,
and finally the inverse of~(T5) to get the arc into standard position.
Thus, the invariance under a general move~(T4) can be reduced to the invariance under the specific move~$\mathrm{(T4_a)}$. If~$F'$ is obtained from~$F$ via such a move, then
Equations~\eqref{eq:T4} and~\eqref{eq:T4'} imply that the generalised Seifert matrices~$A^\varepsilon_F$ and~$A^\varepsilon_{F'}$ have identical diagonals (recall that we have~$n=0$), and satisfy
\[
B^\varepsilon_F=A^\varepsilon_F+(A^{\varepsilon}_F)^{\T}=A^\varepsilon_{F'}+(A^\varepsilon_{F'})^{\T}=B^\varepsilon_{F'}
\]
for all~$\varepsilon\in\{\pm\}^m$.
Thus, any combination of these matrices leads to the same Arf invariant.
\end{proof}

This concludes the proof of Theorem~\ref{thm:main}, assuming Lemma~\ref{T5invariance} whose demonstration
we now present.

\begin{proof}[Proof of Lemma \ref{T5invariance}]
Let us fix two totally connected C-complexes~$F$ and~$F'$ that are related by the move~(T5) illustrated in Figure~\ref{T5}, whose notation we assume.
Since~$F$ is totally connected, we have~$H_1(F')=H_1(F)\oplus \Z x'\oplus \Z y'$, with~$x',y'\subset F_i\cup F_j$ the cycles represented in the right side of Figure~\ref{T5}.
Up to a move~(T1) along~$F_i$ if needed, a basis of~$F$ can be chosen to contain a unique cycle~$z$ as in Figure~\ref{T5}.
Consider the symbols~$\delta$ and~$\chi$ introduced in Notation~\ref{notation}.
In the appropriate bases, the generalized Seifert matrix~$A^\varepsilon_{F'}$ is obtained from~$A^\varepsilon_F$ via
\[
A^\varepsilon_{F'}=\begin{pmatrix}
A^\varepsilon_F & \delta(\varepsilon_i,-\sigma_i)\chi(z) & \xi(\varepsilon_i,\varepsilon_j)\\
\delta(\varepsilon_i,\sigma_i)\chi(z)^{\T} & 0 & \delta(\varepsilon_i,\sigma_i)\delta(\varepsilon_j,\sigma_j)\\
\xi(-\varepsilon_i,-\varepsilon_j)^{\T} & \delta(\varepsilon_i,-\sigma_i)\delta(\varepsilon_j,-\sigma_j)  & \ell(\varepsilon_i,\varepsilon_j) 
\end{pmatrix}\,
\]
with~$\ell(-\varepsilon_i,-\varepsilon_j)=\ell(\varepsilon_i,\varepsilon_j)\in\Z$ and~$\xi(\varepsilon_i,\varepsilon_j)$ some integral vector only depending on~$\varepsilon_i$ and~$\varepsilon_j$.

Assuming Notation~\ref{not:n} and setting
\[
n_\text{e}:=n_{ij}^{\sigma_i\sigma_j}+n_{ij}^{-\sigma_i-\sigma_j}\,,\quad n_\text{o}:=n_{ij}^{\sigma_i-\sigma_j}+n_{ij}^{-\sigma_i\sigma_j}\,,
\]
this leads to
\begin{equation}
\label{eq:A-A'}
A^E_{F'}=\sum_{\varepsilon\in E}A^{\varepsilon}_{F'}=\begin{pmatrix}
A^E_F&n_i^{-\sigma_i}\chi(z)& \sum_{\varepsilon\in E}\xi(\varepsilon_i,\varepsilon_j)\\
n_i^{\sigma_i}\chi(z)^{\T}&0&n_{ij}^{\sigma_i\sigma_j}\\
\sum_{\varepsilon\in E}\xi(-\varepsilon_i,-\varepsilon_j)^{\T}&n_{ij}^{-\sigma_i-\sigma_j}& n_\text{e}\ell(\sigma_i,\sigma_j)+n_\text{o}\ell(\sigma_i,-\sigma_j)\end{pmatrix}\,.
 \end{equation}
By assumption, the integer~$n_i^{\sigma_i}+n_i^{-\sigma_i}=\vert E\vert$ is even, yielding
\[
B^E_{F'}=A^E_{F'}+(A^E_{F'})^{\T}=\begin{pmatrix}
B^E_F&0&\xi^E\\
0&0&n_\text{e}\\
(\xi^E)^{\T}&n_\text{e}&0\end{pmatrix},
\]
where
\[
\xi^E= \sum_{\varepsilon\in E}\xi(\varepsilon_i,\varepsilon_j)+ \sum_{\varepsilon\in E}\xi(-\varepsilon_i,-\varepsilon_j)=
n_\text{e}(\xi(\sigma_i,\sigma_j)+\xi(-\sigma_i,-\sigma_j))
+n_\text{o}(\xi(\sigma_i,-\sigma_j)+\xi(-\sigma_i,\sigma_j))\,.
\]
Since
\[
n_\text{e}+n_\text{o}=n_{ij}^{\sigma_i\sigma_j}+n_{ij}^{-\sigma_i-\sigma_j}+n_{ij}^{\sigma_i-\sigma_j}+n_{ij}^{-\sigma_i\sigma_j}=\vert E \vert
\]
is even, the two integers~$n_\text{e}$ and~$n_\text{o}$ have the same parity.
If it is even, then by the computations above, we have~$B^E_{F'}=B^E_F\oplus\left(\begin{smallmatrix}0&0\\ 0&0\end{smallmatrix}\right)$ while the matrix~$A^E_{F'}$ is of the form
\[
A^E_{F'}=\begin{pmatrix} A^E_F& \ast&\ast\\ \ast&0&\ast\\ \ast&\ast&0\end{pmatrix}\,.
\]
The values of~$\Arf(q^E_{F'})$ and~$\Arf(q^E_F)$ are then easily seen to coincide, as
in the proof of the invariance under move~(T1) above.
If~$n_\text{e}$ and~$n_\text{o}$ are odd, then the matrix~$B^E_{F'}$ is of the form
\begin{equation}
\label{eq:B-B'}
B^E_{F'}=\begin{pmatrix} B^E_F& 0&\ast\\ 0&0&1\\ \ast&1&0\end{pmatrix}\sim\begin{pmatrix} B^E_F& 0&0\\ 0&0&1\\0&1&0\end{pmatrix}\,,
\end{equation}
with~$\sim$ standing for congruence, leading to~$\rad(B^E_{F'})=\rad(B^E_F)\subset H_1(F;\Z_2)$.
By~\eqref{eq:A-A'}, the restriction of~$q^E_{F'}$ to~$H_1(F;\Z_2)$ coincides with~$q^E_{F}$,
so~$q^E_{F'}$ vanishes on~$\rad(B^E_{F'})$ if and only if~$q^E_{F}$ vanishes on~$\rad(B^E_{F})$.
Let us assume that this is the case, so that neither~$\Arf(q^E_{F'})$ nor~$\Arf(q^E_{F})$ is undefined.
By Equation~\eqref{eq:B-B'}, if~$(a_k)_k$ is a basis of~$H_1(F;\Z_2)/\rad(B^E_{F})$ which is symplectic with respect to~$B^E_F$, then a basis of~$H_1(F';\Z_2)/\rad(B^E_{F'})$ which is symplectic with respect to~$B^E_{F'}$ is given
by~$(a'_k)_k\cup(x,y)$, with $a'_k\in\{a_k, a_k+x\}$. By~\eqref{eq:A-A'},
we have~$q^E_{F'}(x)=0$ and~$q^E_{F'}(a_k')=q^E_F(a_k)$ for all~$k$. Equation~\eqref{formularf} now implies that~$\Arf(q^E_{F'})$ and~$\Arf(q^E_{F})$ coincide.
\end{proof}

\section{Determination by linking numbers}
\label{sec:lk}

The aim of this last section is to give the proof of Proposition~\ref{prop:intro}, whose statement we recall
for the reader's convenience.

\begin{proposition}
\label{prop:lk}
For any~$m$-colored link~$L$ and any~$E\subset\{\pm\}^m$  of even cardinality,
the value of~$\Arf_{\!E}(L)$ is determined by the linking numbers of the components of~$L$.
\end{proposition}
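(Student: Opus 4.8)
The plan is to reduce the computation of $\Arf_{\!E}(L)$ to a quantity that depends only on the linking matrix of $L$, by choosing a convenient C-complex $F$ and reading off the relevant data from $A_F^E$ and $B_F^E$. By Theorem~\ref{thm:main} we are free to pick any totally connected C-complex we like, so the first step is to fix a standard model: take each Seifert surface $F_i$ to be a disk with bands (one band per self-crossing of $L_i$, arranged to realize the framing), and for each pair of colors $i\neq j$ let $F_i$ and $F_j$ meet in exactly $|\lk(K,K')|$ clasps summed over components $K\subset L_i$, $K'\subset L_j$, with signs dictated by the signs of the crossings. The point is that for such a model, a basis of $H_1(F;\Z_2)$ splits into ``band'' generators (one per band of each $F_i$) and ``clasp loop'' generators (one per clasp), and the generalized Seifert matrices $A_F^\varepsilon$, hence $A_F^E=\sum_{\varepsilon\in E}A_F^\varepsilon$ and $B_F^E=A_F^E+(A_F^E)^\T$, have entries that are explicit $\Z_2$-linear combinations of the linking numbers $\lk(K,K')$ and the self-framing data, with coefficients depending only on $E$ through the counting functions $n_i^\sigma$, $n_{ij}^{\sigma\tau}$ of Notation~\ref{not:n}.

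Next I would carry out the key observation about the band generators. For a band generator $x$ supported in $F_i$, the loop $x^\varepsilon$ only depends on $\varepsilon_i$, and $q_F^\varepsilon(x)=\lk(x^\varepsilon,x)$ equals the $\varepsilon_i$-pushoff self-linking, which modulo $2$ is independent of $\varepsilon_i$ (the two pushoffs differ by a cycle bounding in $S^3\setminus F$, or more concretely $\lk(x^+,x)+\lk(x^-,x)=x\cdot x=0$ in $\Z_2$). Hence $q_F^\varepsilon(x)$ is one fixed value $\nu(x)\in\Z_2$ for all $\varepsilon$, so $q_F^E(x)=|E|\,\nu(x)=0$ since $|E|$ is even. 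Similarly the interaction terms between band generators and all other generators, and between two band generators of the same or different colors, enter $A_F^E$ with an overall factor $n_i^\sigma$ or a sum $n_i^\sigma+n_i^{-\sigma}=|E|$, and since $E$ has even cardinality these contributions either vanish mod $2$ or pair up; this is exactly the phenomenon already exploited in the proofs of invariance under (T1) and (T5). The upshot is that, modulo the radical, $B_F^E$ and $q_F^E$ are entirely controlled by the clasp loop generators, and the pairing between two clasp loops (coming from clasps in $F_i\cap F_j$ and in $F_k\cap F_l$) is a function of $E$ and of whether the clasps share a surface, times $\pm 1$; crucially this data is determined by which pairs of components clasp and how often, i.e.\ by the linking matrix.

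Then I would assemble the conclusion: the induced non-singular form $\overline{q_F^E}$ on $H_1(F;\Z_2)/\rad(B_F^E)$, if defined, lives on a space whose dimension and whose form are computed purely from the numbers $\lk(K,K')\bmod 2$ (and the combinatorial datum $E$), so $\Arf(\overline{q_F^E})$ is a function of these; and whether $q_F^E$ vanishes on $\rad(B_F^E)$ — the dividing line between cases (i)/(ii) and case (iii) — is likewise read off from the same data, since $\rad(B_F^E)$ is spanned by the band generators together with an explicit combination of clasp loops determined by $E$ and the linking numbers, and $q_F^E$ evaluates on these to expressions in the $\lk(K,K')$. Formula~\eqref{formularf} then expresses the Arf invariant itself as a sum over a symplectic basis built from clasp loops, each term a product of two linking-number expressions. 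I would organize the writeup so that the actual entries of $A_F^E$ for the standard model are displayed once, the even-cardinality cancellations are invoked, and the final identification is immediate.

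The main obstacle I anticipate is bookkeeping rather than conceptual: setting up the standard C-complex cleanly enough that the matrix $A_F^\varepsilon$ has a genuinely explicit block form (band $\times$ band, band $\times$ clasp, clasp $\times$ clasp), getting the clasp sign conventions and the well-behaved-pushoff conventions consistent with Figure~\ref{clasp-regular}, and verifying that every off-diagonal or diagonal contribution that is \emph{not} manifestly a linking number indeed carries a coefficient of the form $n_i^\sigma$, $n_i^\sigma+n_i^{-\sigma}$, or $n_{ij}^{\sigma\tau}+\cdots$ that collapses modulo $2$ because $|E|$ is even. Once that block description is in hand, the reduction to linking numbers is essentially forced; the delicate point is making sure no ``hidden'' contribution to $q_F^E$ on the radical survives that would depend on more than the linking matrix.
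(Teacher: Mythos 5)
There is a genuine gap, and it sits exactly where your ``standard model'' enters. Theorem~\ref{thm:main} lets you choose any totally connected C-complex \emph{for the given link}~$L$, not any C-complex realizing the same linking matrix; and for links that are not determined by their linking numbers (the Whitehead link, the Borromean rings, any link with knotted components) the model you describe --- disks with bands ``realizing the framing'' and exactly~$|\lk(K,K')|$ clasps, with matrix entries given by linking numbers --- is simply not a C-complex for~$L$: it is a C-complex for some other link with the same linking matrix. So the statement you would need, namely that two colored links with equal linking matrices have equal~$\Arf_{\!E}$, is precisely the proposition being proved, and cannot be assumed via a choice of model. Your even-cardinality cancellations are real but do less than you claim: summing over~$\varepsilon\in E$ kills the ``base'' pushoff linking of each basis curve (this is why the classical, odd case escapes the phenomenon), yet what survives mod~$2$ is intersection-theoretic data of the particular C-complex --- e.g.\ for same-color band generators the surviving entry is the intersection form when~$n_i^{+}$ and~$n_i^{-}$ are odd, and band--clasp and clasp--clasp entries retain how your basis curves run through the clasps. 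Genera, the number and arrangement of clasps beyond the algebraically necessary ones, and these crossing patterns are not functions of the linking matrix, and you give no argument that the radical of~$B^E_F$, the vanishing of~$q^E_F$ on it, and the resulting Arf value are insensitive to them. Note also that not everything cancels: a crossing change between colors~$i,j$ preserves the value only when~$n_{ij}^{++}\equiv n_{ij}^{--}\pmod 2$ (Lemma~\ref{lemma:homotopy}); for instance, with~$m=2$ and~$E=\{++,+-\}$ the Hopf link has~$\Arf_{\!E}=0$ while the~$(2,4)$-torus link has~$\Arf_{\!E}$ undefined, so a surviving dependence on the linking numbers must be tracked, not argued away.

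Compare with the paper's route, which supplies exactly the bridge your sketch lacks. The even-$|E|$ cancellation is localized in two crossing-change lemmas: a same-color crossing change is a full twist on a band (Lemma~\ref{lemma:homotopy1}), a different-color crossing change is the addition of a clasp (Lemma~\ref{lemma:homotopy}); these yield a partition of the colors (Lemma~\ref{partition}) within which crossing changes are harmless, one merges the colors to a~$2$-colored link, and then --- the step for which your proposal has no substitute --- one invokes the classification of~$2$-colored links up to link homotopy by their linking matrices (Lemma~\ref{lemma:classification}) to reach a normal form determined by the linking numbers. To repair your argument you would have to prove that your standard model is related to~$L$ by moves which your cancellations show preserve~$\Arf_{\!E}$, i.e.\ by same-color (and suitable different-color) crossing changes together with a normal-form statement; at that point you will have reconstructed the paper's proof rather than bypassed it.
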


Our proof of this result relies on four lemmas.
The first one is best formulated using the following terminology:
two colored links are said to be {\em homotopic\/} if they can be
related by isotopies (preserving the orientation and color of each component) and crossing changes between strands of the same color.
This first lemma states that generalized Arf invariants are invariant
under homotopy of colored links.

\begin{figure}
\centering
\begin{overpic}[width=0.5\linewidth]{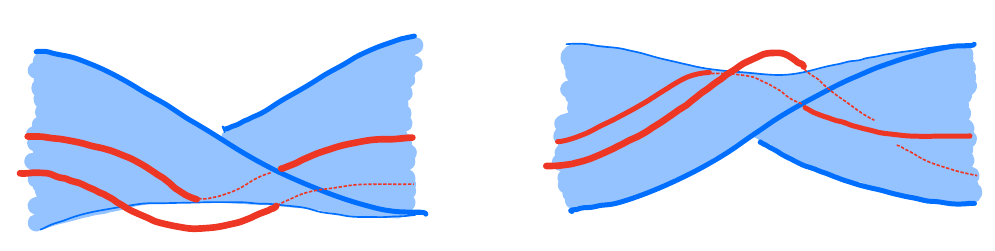}
\put(60,22){$F'$}
\put(4,22){$F$}
\put(-3,4){$x^{\varepsilon}$}
\put(99.5,11.5){$x'$}
\put(100,6){$(x')^\varepsilon$}
\put(-1,11){$x$}
\end{overpic}
\caption{A crossing change between two strands of the same color is realised by twisting a band.}
\label{twist}
\end{figure}

\begin{lemma}
\label{lemma:homotopy1}
If~$L$ and~$L'$ are homotopic~$m$-colored links,
then for any~$E\subset\{\pm\}^m$ of even cardinality, we have~$\Arf_{\!E}(L)=\Arf_{\!E}(L')$.
\end{lemma}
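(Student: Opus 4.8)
The plan is to reduce the statement to a local computation near a single band twist, exactly the kind of move already analyzed in the proof of Theorem~\ref{thm:main}. Suppose $L$ and $L'$ are homotopic $m$-colored links; decompose the homotopy into isotopies (for which the conclusion is Theorem~\ref{thm:main}, since totally connected C-complexes can be chosen on both sides and the value of $\Arf(q_F^E)$ is isotopy-invariant) and finitely many crossing changes between two strands of the same color, say color $i$. It therefore suffices to treat a single such crossing change. First I would fix a totally connected C-complex $F$ for $L$ in which the two relevant strands of $L_i$ cobound a band of $F_i$ (one can always arrange this by an isotopy and, if necessary, a move (T1) producing a small band joining the two strands); performing the crossing change then amounts to inserting a full twist in that band, as depicted in Figure~\ref{twist}, producing a C-complex $F'$ for $L'$.

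The key step is to compare $q_F^E$ and $q_{F'}^E$ under this band twist. There is a canonical identification $H_1(F;\Z_2)\cong H_1(F';\Z_2)$ (the twist does not change the homotopy type of the surface, only the embedding), and under it I claim $\alpha_{F'}^\varepsilon = \alpha_F^\varepsilon$ for every $\varepsilon\in\{\pm\}^m$ \emph{except} possibly on the single cycle $x$ running through the twisted band, where $\lk(x^\varepsilon,x)$ changes by exactly $\pm 1$ (and only for those $\varepsilon$ whose $i$-th coordinate has a prescribed sign, because a band twist on one side of $F_i$ is invisible from the other side). Summing over $\varepsilon\in E$: the bilinear forms satisfy $B_{F'}^E = B_F^E$ because $x$ enters the diagonal only, and $q_{F'}^E$ differs from $q_F^E$ only in the value at $x$, changing by $n_i^\sigma \bmod 2$ for the relevant sign $\sigma$. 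Since $|E|$ is even, $n_i^+$ and $n_i^-$ have the same parity, say $\eta$; if $\eta=0$ then $q_{F'}^E=q_F^E$ outright and there is nothing to prove, while if $\eta=1$ the quadratic form changes by adding the characteristic function of the \emph{non-singular} vector $x\in H_1/\rad(B^E)$ (non-singular because $B^E(x,y)\ne 0$ for some $y$, as $x\notin\rad(B^E)$ whenever $x$ represents an essential twist — and if $x\in\rad(B^E)$ one checks directly, as in the (T1) argument, that changing $q$ at a radical vector either preserves the trichotomy or is itself impossible since $B^E$ is unchanged). The remaining point is the standard fact that adding to a non-singular quadratic form $q$ the characteristic function of a single non-singular vector $v$ does not change the Arf invariant: extend $v=e_1$ to a symplectic basis $e_1,f_1,\dots,e_n,f_n$ and apply Equation~\eqref{formularf}, noting $(q+\mathbb{1}_{\{v\}})(e_1)(q+\mathbb{1}_{\{v\}})(f_1)=(q(e_1)+1)q(f_1)$, whose sum over the natural quadruple symmetry $e_1\mapsto e_1, f_1\mapsto e_1+f_1$ is unchanged modulo $2$ — more cleanly, $\sum_{z}(-1)^{(q+\mathbb{1}_{\{v\}})(z)} = \sum_z (-1)^{q(z)}$ splits over cosets of $\langle v\rangle^\perp$ and the contribution of each coset not containing $v^\perp$ information cancels, leaving the Gauss sum unchanged in absolute value and sign.

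The main obstacle, and the place to be careful, is the bookkeeping of which signs $\varepsilon$ actually see the twist and by how much, together with the borderline case $x\in\rad(B_F^E)$; this is precisely the subtlety that made the parity of $|E|$ essential in Theorem~\ref{thm:main}, and the same parity hypothesis is what saves the argument here. Once the local change of $q^E$ under a band twist is pinned down, the proposition follows by concatenating crossing changes and isotopies. I would present this as Lemma~\ref{lemma:homotopy1}, with Figure~\ref{twist} carrying the geometric content, and phrase the algebraic input as a short observation that $\Arf$ is unchanged by adding the characteristic function of a non-singular vector (or, in the singular-vector case, that the trichotomy value is unchanged).
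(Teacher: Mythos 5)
Your overall strategy -- realize a same-color crossing change by a full twist in a band of the C-complex as in Figure~\ref{twist}, identify $H_1(F;\Z_2)\cong H_1(F';\Z_2)$, and compare $q^E_F$ with $q^E_{F'}$ -- is the same as the paper's, but your local computation of the effect of the twist is wrong, and this is not a cosmetic point. A full twist in a band of $F_i$ is \emph{not} invisible from one side of $F_i$: it changes the surface framing of any cycle $x$ crossing the band once, so $\lk(x^\varepsilon,x)$ changes by $1$ modulo $2$ for \emph{every} $\varepsilon\in\{\pm\}^m$, regardless of the sign $\varepsilon_i$. That is precisely what makes the lemma true: summing over $\varepsilon\in E$, the value $q^E(x)$ changes by $|E|\equiv 0$, while classes not meeting the band are untouched, so $q^E_{F'}=q^E_F$ identically under the canonical isomorphism and the trichotomy value is trivially unchanged. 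Under your model the total change would instead be $n_i^\sigma\bmod 2$, which can be odd even when $|E|$ is even (take $E=\{++,+-\}$ and $i=2$, so $n_2^+=n_2^-=1$), so your parity bookkeeping does not close the argument.

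The patch you propose for that case is also flawed in two ways. First, the difference $q^E_{F'}-q^E_F$ is not ``the characteristic function of the single vector $x$'': two quadratic refinements of the same bilinear form $B^E$ differ by a \emph{linear} functional (here, the functional counting modulo $2$ how often a class runs through the band), and adding the indicator of one vector to a quadratic form does not in general produce a quadratic form at all. Second, the ``standard fact'' you invoke is false in the form actually needed: if $q'=q+B(\cdot,w)$, then $q'(v)=q(v+w)+q(w)$, hence $\sum_v(-1)^{q'(v)}=(-1)^{q(w)}\sum_v(-1)^{q(v)}$ and $\Arf(q')=\Arf(q)+q(w)$, so adding a nonzero linear functional can change the Arf invariant. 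In other words, if the twist really affected only the pushoffs with $\varepsilon_i=\sigma$, the statement could fail; the proof genuinely requires the geometric observation that a full twist is seen by both pushoffs simultaneously, which is the one-line content of the paper's proof.
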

\begin{proof}
Any crossing change between two strands of the same color can be realised by adding a full twist to a band in a C-complex, as illustrated in Figure~\ref{twist}.
Writing~$F$ and~$F'$ for the corresponding C-complexes,
one immediately sees that the groups~$H_1(F;\Z_2)$ and~$H_1(F';\Z_2)$ are canonically isomorphic;
let us denote by~$x'\in H_1(F';\Z_2)$ the image under this isomorphism of an arbitrary cycle~$x$ in~$F$.
We will  now check that~$q^E_{F}(x)=q^E_{F'}(x')$ for any~$x\in H_1(F;\Z_2)$, which implies the statement of the lemma.
Consider an arbitrary class in~$H_1(F;\Z_2)$. Since we are working modulo~2, this class is represented
by a cycle~$x\subset F$ which is either supported outside the part of~$F$ depicted in Figure~\ref{twist},
or crosses this band once. In the first case, we clearly have~$q^\varepsilon_{F}(x)=q^\varepsilon_{F'}(x')$
for all~$\varepsilon$ and the equality~$q^E_{F}(x)=q^E_{F'}(x')$ holds trivially.
In the second, and for any choice of signs~$\varepsilon\in\{\pm\}^m$, we have
the modulo~2 equality
\[
q_F^\varepsilon(x)=\lk(x^\varepsilon,x)=\lk((x')^\varepsilon,x')+1=q_{F'}^\varepsilon(x')+1\,,
\]
see Figure~\ref{twist}.
Since this holds for all~$\varepsilon$ and~$E$ is even, the equality~$q^E_{F}(x)=q^E_{F'}(x')$ follows.
\end{proof}

Before stating the next lemma, recall that~$n_{ij}^{\sigma\tau}$ denotes the cardinality of~$\{\varepsilon\in E\,|\,\varepsilon_i=\sigma,\,\varepsilon_j=\tau\}$.

\begin{lemma}
Let $E\subset\{\pm\}^m$ be a set of even cardinality and $i,j$ be colors such that~$n_{ij}^{++}$
and~$n_{ij}^{--}$ have the same parity.
Then, given any colored links~$L,L'$ related by a crossing change between components of colors~$i$ and~$j$,
we have~$\Arf_{\!E}(L)=\Arf_{\!E}(L')$.
\label{lemma:homotopy}
\end{lemma}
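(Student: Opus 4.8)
The strategy is to reduce a crossing change between components of colors $i$ and $j$ to the move~(T5) (or rather, to a situation controlled by the computations already established for~(T5)), so that Lemma~\ref{T5invariance} and the matrix formulas in its proof apply. Geometrically, a crossing change between a strand of color $i$ and a strand of color $j$ can be realized by adding a clasp between $F_i$ and $F_j$ and then removing it with the opposite sign, or equivalently by performing a move~(T5) followed by its inverse composed with a twist; in any case the two C-complexes $F$ and $F'$ for $L$ and $L'$ differ by introducing a pair of new generators $x,y$ in $H_1(\,\cdot\,;\Z_2)$ whose behavior is exactly that described by Equation~\eqref{eq:A-A'}. The key point is that the hypothesis ``$n_{ij}^{++}$ and $n_{ij}^{--}$ have the same parity'' is precisely what makes $n_\text{e}=n_{ij}^{\sigma_i\sigma_j}+n_{ij}^{-\sigma_i-\sigma_j}$ even (for the appropriate choice of $\sigma_i,\sigma_j$ matching the sides of the clasp), which is the favorable case in the proof of Lemma~\ref{T5invariance} where $B^E_{F'}=B^E_F\oplus\left(\begin{smallmatrix}0&0\\ 0&0\end{smallmatrix}\right)$ and $A^E_{F'}$ has vanishing lower-right block, so that the Arf invariant is unchanged.

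\textbf{Key steps, in order.} First, I would make explicit the C-complex picture: given $L$ and $L'$ related by a crossing change between a component $K\subset L_i$ and a component $K'\subset L_j$, choose a totally connected C-complex $F$ for $L$ and describe how the crossing change modifies $F$ near the two strands — adding a small band/clasp feature — to produce a (totally connected) C-complex $F'$ for $L'$, arranged so that the modification is the composite of a~(T5) move and a controlled ambient isotopy, with $\sigma_i,\sigma_j$ recording on which sides of $F_i,F_j$ the feature sits. Second, I would invoke Equation~\eqref{eq:A-A'} to write down $A^E_{F'}$ in terms of $A^E_F$; here $n_\text{e}$ is a $\pm$-symmetric sum of the $n_{ij}^{\sigma\tau}$, and the hypothesis guarantees $n_\text{e}$ is even. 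Third, I would feed this into the even-$n_\text{e}$ case of the proof of Lemma~\ref{T5invariance}: $B^E_{F'}=B^E_F\oplus(0)$ and the two new diagonal entries of $A^E_{F'}$ vanish, so the new generators $x,y$ lie in $\rad(B^E_{F'})$ with $q^E_{F'}(x)=q^E_{F'}(y)=0$; exactly as in the (T1) argument, this shows $q^E_{F'}$ vanishes on its radical iff $q^E_F$ does, and when defined the Arf invariants agree. Finally, since $\Arf_{\!E}$ is already known to be a well-defined invariant of the colored link (Theorem~\ref{thm:main}), the equality $\Arf(q^E_{F'})=\Arf(q^E_F)$ gives $\Arf_{\!E}(L')=\Arf_{\!E}(L)$.

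\textbf{Main obstacle.} The delicate point is step one: checking that a crossing change between differently colored strands can genuinely be realized, at the level of C-complexes, by a~(T5) move whose parameters $\sigma_i,\sigma_j$ are the ones that make the relevant $n_\text{e}$ the \emph{even} combination $n_{ij}^{\sigma_i\sigma_j}+n_{ij}^{-\sigma_i-\sigma_j}$ rather than the odd one. One must be careful that the ``$++$ and $--$ have the same parity'' hypothesis is color-symmetric but the (T5) setup singles out sides, so a small case analysis (or a symmetry argument using $A^{-\varepsilon}=(A^\varepsilon)^{\T}$ and the invariance $\Arf_{\!E}\equiv\Arf_{\!-E}$) is needed to ensure we always land in the favorable case. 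Once the geometric reduction is in place, the algebra is entirely routine and borrowed verbatim from the proof of Lemma~\ref{T5invariance}.
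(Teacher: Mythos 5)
Your reduction to the move~(T5) cannot work, and the failure is visible already at the level of statements: Lemma~\ref{T5invariance} preserves the value of $\Arf(q^E)$ for \emph{every} $E$ of even cardinality, with no condition on the parities of $n_{ij}^{++}$ and $n_{ij}^{--}$. So if a crossing change between components of colors $i$ and $j$ could be realised (up to isotopy) by a~(T5) move, the present lemma would hold without its parity hypothesis -- but it does not. Indeed, for $m=2$ and $E=\{++,+-\}$ one has $n_{12}^{++}=1$, $n_{12}^{--}=0$, and Example~\ref{ex:Arf} exhibits $2$-colored links related by crossing changes between the two colors (the Hopf link, the $(2,4)$- and $(2,6)$-torus links) whose values of $\Arf_{\!E}$ are $0$, undefined and $1$ respectively. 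Hence a crossing change between differently colored strands is, in general, \emph{not} a composition of~(T5) moves and isotopies, and the first (geometric) step of your plan is genuinely false. Relatedly, your identification of the hypothesis with ``$n_\text{e}$ even'' in Equation~\eqref{eq:A-A'} misplaces where the hypothesis acts: under the hypothesis \emph{both} $n_{ij}^{++}+n_{ij}^{--}$ and $n_{ij}^{+-}+n_{ij}^{-+}$ are even (since $\vert E\vert$ is even), and in any case the conclusion of Lemma~\ref{T5invariance} holds whether $n_\text{e}$ is even or odd, so the hypothesis would play no real role in your argument -- a sign that the argument proves too much.

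The correct local model is simpler than~(T5): the crossing change is realised by adding a single clasp between $F_i$ and $F_j$ (Figure~\ref{homotopy_colors}). Because $F$ is totally connected, this adds only \emph{one} new generator $y\subset F_i\cup F_j$ to $H_1$, not two, and the generalized Seifert matrices grow by one row and one column, with new diagonal entry $\ell(\varepsilon_i,\varepsilon_j)=\ell(-\varepsilon_i,-\varepsilon_j)$ and column $\xi(\varepsilon_i,\varepsilon_j)$. The parity hypothesis is then used exactly twice: to get $q^E_{F'}(y)=\sum_{\varepsilon\in E}\ell(\varepsilon_i,\varepsilon_j)=(n_{ij}^{++}+n_{ij}^{--})\ell(+,+)+(n_{ij}^{+-}+n_{ij}^{-+})\ell(+,-)\equiv 0$, and to make the column sums $\sum_{\sigma,\tau}(n_{ij}^{\sigma\tau}+n_{ij}^{-\sigma-\tau})\xi(\sigma,\tau)$ vanish mod~$2$, so that $B^E_{F'}=B^E_F\oplus(0)$ and $y$ is a radical element on which $q^E_{F'}$ vanishes; when the parities differ, $q^E_{F'}(y)$ can equal $\ell(+,+)+\ell(+,-)\equiv 1$ and the Arf value genuinely changes, consistent with the example above. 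Finally, your proof should also dispose of the case $i=j$, where the hypothesis is automatic and the conclusion follows from Lemma~\ref{lemma:homotopy1}.
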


\begin{proof}
First note that if the colors~$i$ and~$j$ coincide (a case that was not ruled out explicitly),
then we have~$n_{ii}^{++}+n_{ii}^{--}=\vert E\vert$ even, so the assumption is always verified,
and the conclusion holds by Lemma~\ref{lemma:homotopy1}. Therefore, we can now assume that~$i$ and~$j$ are distinct colors.
Any crossing change between two strands of different colors can be realised by adding a clasp in a C-complex, as illustrated in Figure~\ref{homotopy_colors}. Assuming the notations~$F,F'$ of this figure,
and using the fact that~$F$ is connected, we get a natural isomorphism~$H_1(F';\Z_2)\simeq H_1(F;\Z_2)\oplus \Z_2 y$, with~$y$ a cycle passing through the newly created clasp. Moreover, since~$F$ is totally connected,
one can assume that~$y$ is contained in~$F_i\cup F_j$.
Expressed in coherent bases, the generalized Seifert matrices are related by 
\[
A_{F'}^\varepsilon=\begin{pmatrix}
A^\varepsilon_{F} & \xi(\varepsilon_i,\varepsilon_j)\\  \xi(-\varepsilon_i,-\varepsilon_j)^{\T}&  \ell(\varepsilon_i,\varepsilon_j)\end{pmatrix}\,,
\]
with~$\ell(\varepsilon_i,\varepsilon_j)=\ell(-\varepsilon_i,-\varepsilon_j)\in\Z$ and~$\xi(\varepsilon_i,\varepsilon_j)$ some integral vector which only depends on~$\varepsilon_i$ and~$\varepsilon_j$.
By assumption, the two integers~$n_{ij}^{++}$ and~$n_{ij}^{--}$ have the same parity. Since
\[
n_{ij}^{++}+n_{ij}^{--}+n_{ij}^{+-}+n_{ij}^{-+}=\vert E\vert
\]
is even, the two integers~$n_{ij}^{+-}$ and~$n_{ij}^{-+}$ also have identical parity. Since~$\ell(\varepsilon_i,\varepsilon_j)=\ell(-\varepsilon_i,-\varepsilon_j)$, this leads to
\[
q^E_{F'}(y)=\sum_{\varepsilon\in E}\ell(\varepsilon_i,\varepsilon_j)=(n_{ij}^{++}+n_{ij}^{--})\ell(+,+)+(n_{ij}^{+-}+n_{ij}^{-+})\ell(+,-)=0\,.
\]
This also leads to
\[
\sum_{\varepsilon\in E}\xi(\varepsilon_i,\varepsilon_j)+\sum_{\varepsilon\in E}\xi(-\varepsilon_i,-\varepsilon_j)
=\sum_{\sigma,\tau=\pm}(n_{ij}^{\sigma\tau}+n_{ij}^{-\sigma-\tau})\xi(\sigma,\tau)=0\,,
\]
and thus to the identity~$B^E_{F'}=B^E_{F}\oplus(0)$. The equality~$\Arf(q^E_{F'})=\Arf(q^E_{F})$ follows.
\end{proof}

\begin{figure}
\centering
\begin{overpic}[width=0.8\linewidth]{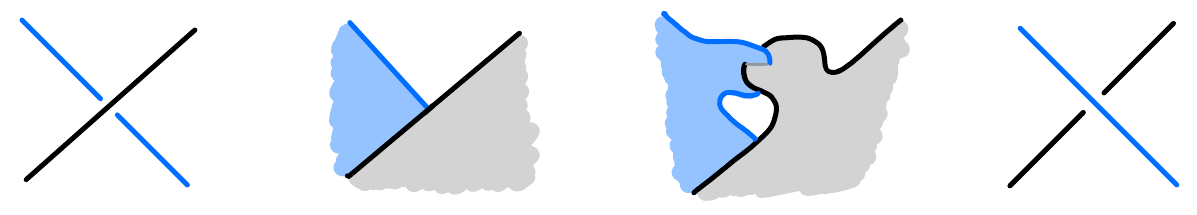}
\put(35,13){$F$}
\put(63,16.5){$F'$}
\put(48,9){$\longrightarrow$}
\end{overpic}
\caption{A crossing change between two strands of different colors can be realised by adding a clasp.}
\label{homotopy_colors}
\end{figure}

The following lemma shows that we can partition the colors in two sets, inside of which changing crossings will not affect the Arf invariant.

\begin{lemma}
Let~$E\subset\{\pm\}^m$ be of even cardinality. Then there exists a partition~$I_0\sqcup I_1$ of~$\{1,\dots, m\}$ such that for any~$m$-colored link~$L$, the value of~$\Arf_{\!E}(L)$ is invariant under crossing changes between components of colors belonging to the same subset of this partition.
\label{partition}
\end{lemma}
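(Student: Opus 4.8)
The plan is to combine Lemmas~\ref{lemma:homotopy1} and~\ref{lemma:homotopy} to build the desired partition of the set of colors. The key object to exploit is the parity behaviour of the quantities $n_{ij}^{\sigma\tau}$ attached to $E$: for each ordered pair of distinct colors $(i,j)$, Lemma~\ref{lemma:homotopy} guarantees that crossing changes between components of colors $i$ and $j$ leave $\Arf_{\!E}$ unchanged \emph{provided} $n_{ij}^{++}$ and $n_{ij}^{--}$ have the same parity. So I would define a relation on $\{1,\dots,m\}$ by declaring $i\sim j$ when either $i=j$ or $n_{ij}^{++}\equiv n_{ij}^{--}\pmod 2$, and first try to show this is an equivalence relation. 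Symmetry is clear since $n_{ij}^{\sigma\tau}=n_{ji}^{\tau\sigma}$, so the real work is transitivity.

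The second step is to establish transitivity, which is where I expect the main obstacle to lie. Suppose $i\sim j$ and $j\sim k$; I want $i\sim k$. The natural approach is to introduce, for a color $i$, the parity $p_i:=n_i^{+}\bmod 2$ (the number of $\varepsilon\in E$ with $\varepsilon_i=+$), and to relate $n_{ij}^{++}-n_{ij}^{--}$ modulo $2$ to $p_i$ and $p_j$. Indeed, counting gives $n_{ij}^{++}+n_{ij}^{+-}=n_i^{+}$ and $n_{ij}^{++}+n_{ij}^{-+}=n_j^{+}$, together with $n_{ij}^{++}+n_{ij}^{+-}+n_{ij}^{-+}+n_{ij}^{--}=|E|$, which is even. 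From these one should be able to deduce that $n_{ij}^{++}\equiv n_{ij}^{--}\pmod 2$ if and only if $n_i^{+}+n_j^{+}$ is even, i.e.\ if and only if $p_i=p_j$. Hence the relation $i\sim j$ is exactly ``$p_i=p_j$'' on distinct colors, which is transparently an equivalence relation with at most two classes; I would then take $I_0$ and $I_1$ to be these two classes (one of which may be empty), corresponding to $p_i=0$ and $p_i=1$.

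Finally, once the partition $I_0\sqcup I_1$ is in place, the conclusion is immediate: a crossing change between two components whose colors $i,j$ lie in the same part of the partition either has $i=j$ (handled by Lemma~\ref{lemma:homotopy1}) or has $i\ne j$ with $p_i=p_j$, hence $n_{ij}^{++}\equiv n_{ij}^{--}\pmod 2$, so Lemma~\ref{lemma:homotopy} applies and $\Arf_{\!E}$ is unchanged. This gives exactly the statement. The only genuinely delicate point is the modular bookkeeping in the transitivity step, but since $|E|$ is even it reduces to a short linear computation over $\Z_2$ in the four unknowns $n_{ij}^{\sigma\tau}$; I would present that computation explicitly and keep the rest of the argument brief.
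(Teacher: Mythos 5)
Your proposal is correct, and it reaches the same partition via a slightly different (and arguably cleaner) verification than the paper. The paper also combines Lemmas~\ref{lemma:homotopy1} and~\ref{lemma:homotopy}, but it fixes a reference color~$i$, defines $I_s=\{j\,|\,n_{ij}^{++}+n_{ij}^{--}\equiv s\pmod 2\}$, and then checks the needed ``transitivity'' (that $j,k$ in the same part satisfy $n_{jk}^{++}\equiv n_{jk}^{--}$) by a computation with triple counts $n_{ijk}^{\sigma\tau\eta}$. You instead observe that the pairwise condition is equivalent to equality of the single-color parities $p_i=n_i^{+}\bmod 2$: indeed $n_i^{+}+n_j^{+}=2n_{ij}^{++}+n_{ij}^{+-}+n_{ij}^{-+}\equiv n_{ij}^{+-}+n_{ij}^{-+}\equiv n_{ij}^{++}+n_{ij}^{--}\pmod 2$, the last step using that $\vert E\vert$ is even. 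The step you flagged as ``should be able to deduce'' therefore checks out, transitivity becomes automatic, the two classes $\{p_i=0\}$ and $\{p_i=1\}$ give the partition directly (one class possibly empty, which is harmless), and your partition coincides with the paper's since $j\in I_0$ there exactly when $p_j=p_i$. What your route buys is the elimination of the triple-count bookkeeping and an explicit invariant ($p_i$) labelling the two classes; what the paper's route buys is nothing essential beyond staying within the notation $n_{ij}^{\sigma\tau}$ already introduced, so your version could be seen as a mild simplification of the same argument.
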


\begin{proof}
Fix an arbitrary color~$i\in\{1,\dots,m\}$. For~$s=0,1$, set
\[
I_s:=\{j\in\{1,\dots,m\}\,|\,n_{ij}^{++}+n_{ij}^{--}\equiv s \!\!\!\pmod 2 \}\,.
\]
This obviously defines a partition~$I_0\sqcup I_1$ of $\{1,\dots,m\}$, with~$i\in I_0$.
By Lemma~\ref{lemma:homotopy1} and Lemma~\ref{lemma:homotopy}, it only remains to check that for any~$j,k$
in the same subset of this partition, the integers~$n_{jk}^{++}$ and~$n_{jk}^{--}$ have the same parity.
To show this claim, let us first denote by~$n_{ijk}^{\sigma\tau\eta}$ the cardinality of~$\{\varepsilon\in E\,|\,\varepsilon_i=\sigma, \varepsilon_j=\tau, \varepsilon_k=\eta\}$.
Using the identity~$n_{ijk}^{+\sigma\tau}+n_{ijk}^{-\sigma\tau}=n_{jk}^{\sigma\tau}$ and variations thereof,
together with the fact that~$\vert E\vert$ is even, we obtain the following equalities modulo~2:
\begin{align*}
n_{jk}^{++}+n_{jk}^{--}&=n_{ijk}^{+++}+n_{ijk}^{-++}+n_{ijk}^{+--}+n_{ijk}^{---}=
n_{ij}^{+-}+n_{ij}^{-+}+n_{ik}^{++}+n_{ik}^{--}\\
	&=n_{ij}^{++}+n_{ij}^{--}+n_{ik}^{++}+n_{ik}^{--}\,.
\end{align*}
Since~$j$ and~$k$ belong to the same subset of the partition, this number is even, proving the claim.
\end{proof}

The final lemma, which was communicated to us by Christopher Davis, states that~$2$-colored links are classified up to homotopy by the linking numbers of their components of different colors.
More precisely, let us consider a~$2$-colored link~$L=L_1\cup L_2$. Given any
ordering of the components of~$L_1$ and any ordering of the components of~$L_2$, there is an associated
matrix~$\left(\lk(K_1,K_2)\right)_{K_1\subset L_1,K_2\subset L_2}$.
This matrix is not canonically associated to~$L$, but any two such matrices for~$L$
coincide up to reordering the rows and columns.

\begin{lemma}
Two~2-colored links~$L=L_1\cup L_2$ and~$L'=L_1'\cup L_2'$ are homotopic if and only if the associated
matrices~$\left(\lk(K_1,K_2)\right)_{K_1\subset L_1,K_2\subset L_2}$ and~$\left(\lk(K'_1,K'_2)\right)_{K'_1\subset L'_1,K'_2\subset L'_2}$ coincide up to reordering the rows and columns.
\label{lemma:classification}
\end{lemma}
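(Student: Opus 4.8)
The plan is to prove the two implications separately; the forward one is immediate, while the reverse one is the substance and rests on Milnor's theory of link homotopy.

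For the ``only if'' direction, it suffices to observe that $\lk(K_1,K_2)$, for a component $K_1$ of $L_1$ and a component $K_2$ of $L_2$, is left unchanged both by ambient isotopy and by a crossing change between two strands of the same color: such a crossing change involves no strand of at least one of the two colors, hence cannot affect the signed count of crossings between $K_1$ and $K_2$. Since the components of $L_1$ and of $L_2$ carry no preferred ordering, the bipartite linking matrix is only well-defined up to permuting its rows and columns, which is exactly the equivalence in the statement.

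For the ``if'' direction, after relabelling components we may assume $\lk(K_1^{(a)},K_2^{(b)})=\lk(K_1'^{(a)},K_2'^{(b)})=:\ell_{ab}$ for all $a,b$, and it is enough to show that every $2$-colored link $L$ is homotopic to a standard model $L(M)$ depending only on the matrix $M=(\ell_{ab})$ --- for instance, let $L_2(M)$ be $q$ round unknotted, unlinked circles $C_1,\dots,C_q$ placed along an axis and let $L_1(M)$ be $p$ unknotted, pairwise unlinked circles $D_1,\dots,D_p$, the $a$-th of which threads $C_b$ algebraically $\ell_{ab}$ times in a fixed monotone fashion. To reduce an arbitrary $L$ to $L(M)$ I would proceed in two stages. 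First, using crossing changes \emph{within} $L_2$ together with ambient isotopies (both supported away from $L_1$), turn $L_2$ into the trivial link $C_1\cup\dots\cup C_q$; as these moves preserve $L_1$ and every linking number $\ell_{ab}$, we are left with $L_1$ sitting inside the genus-$q$ handlebody $H_q=S^3\setminus\mathrm{int}\,N(L_2)$, with $[K_1^{(a)}]=(\ell_{a1},\dots,\ell_{aq})$ in $H_1(H_q;\Z)\cong\Z^q$. Second, using crossing changes within $L_1$ --- which realize arbitrary homotopies of the $p$-component link-map $L_1$ inside $H_q$, so in particular let the $D_a$'s be separated and each brought to a standard representative of its free homotopy class in $\pi_1(H_q)\cong F_q$ --- together with further crossing changes within $L_2$, reduce each of these conjugacy classes in $F_q$ to its abelianization. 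The result is $L(M)$; applying the same to $L'$ gives $L\sim L(M)\sim L'$.

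The crux is this last step: showing that the conjugacy-class data of $L_1$ in $F_q$ can be pushed down to its abelianization $\Z^q$ (i.e.\ to the linking numbers) using only the moves at our disposal. Invariantly, this is the claim that, among the Milnor invariants $\bar\mu(I)$ that classify the underlying link up to link homotopy, the ones which survive and are pinned down once crossing changes between distinct same-color components are allowed are precisely those over multi-indices $I$ containing no two indices of the same color --- with two colors, only the length-two indices, giving $\bar\mu(ab)=\ell_{ab}$. One half of this is easy: a crossing change between components $r$ and $s$ leaves the sublink on any index set not containing both $r$ and $s$ isotopic to itself, hence fixes all such $\bar\mu(I)$. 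The other half --- that every $\bar\mu(I)$ with $|I|\ge 3$, hence (by the pigeonhole principle) containing a monochromatic pair of indices, can be set to its standard value by crossing changes within the color classes, the $\bar\mu(I)$ being treated in order of non-decreasing $|I|$ --- is the hard part and the main obstacle; I would settle it by a realization argument in the spirit of Milnor's iterated Bing-doubling constructions, producing for each such $I$ a local modification that changes $\bar\mu(I)$ by $\pm1$ modulo lower-order invariants and is effected purely by crossing changes among same-colored strands. A concrete instance of the mechanism, which I expect to generalize, is that a null-homotopy \emph{in the complement of $L_2$} lets one shrink away any component of $L_1$ all of whose linking numbers with $L_2$ vanish; this is exactly what makes, say, the $2$-colored Borromean rings (one color-$1$ ring and two color-$2$ rings) homotopic to the trivial link, even though no single crossing change of the underlying link achieves this while preserving all linking numbers.
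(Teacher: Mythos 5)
Your ``only if'' direction and the overall strategy of reducing to a normal form determined by the linking matrix are fine, and your first stage (trivialize one color by same-color crossing changes, then view the other color inside the complementary handlebody) matches the paper's first steps. But the proof has a genuine gap exactly where you say it does: the claim that the conjugacy class of each remaining component in $\pi_1(H_q)\cong F_q$ can be reduced to its abelianization using only crossing changes within the two color classes is the entire content of the ``if'' direction, and you do not prove it --- you only announce that you ``would settle it by a realization argument in the spirit of Milnor's iterated Bing-doubling constructions.'' Moreover, the invariant-theoretic reformulation you fall back on is shakier than it looks: the $\bar\mu(I)$ with distinct indices do \emph{not} classify links up to link homotopy once there are four or more components (one needs the Habegger--Lin string-link machinery), and the length~$\ge 3$ invariants are only defined modulo lower-order ones, so an induction ``in order of non-decreasing $|I|$, changing $\bar\mu(I)$ by $\pm1$ modulo lower-order invariants'' would need substantial care to even state correctly, let alone to implement by moves confined to same-colored strands. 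As written, the hard half of your trichotomy between easy and hard Milnor indices is precisely the unproved step, so the argument is not complete.

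For comparison, the paper closes this gap with a completely elementary local move and never needs Milnor invariants. After trivializing the color-$1$ link and isotoping each color-$2$ component $K_2$ into its own slice of a cylinder in ``increasing'' position (self-crossing changes unknot $K_2$), the word that $K_2$ reads off in the free group generated by the meridians of $L_1$ is abelianized by a single picture: two consecutive crossings of $K_2$ with $L_1$ can be transposed by pushing the disk bounded by one trivial color-$1$ circle along $K_2$ past another color-$1$ strand, which is exactly a crossing change between components of color~$1$. Since transpositions generate all permutations, the crossings can be sorted (and cancelling pairs removed), yielding the normal form of Figure~\ref{Normalform}, which is determined up to isotopy by the matrix $\left(\lk(K_1,K_2)\right)$. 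If you want to salvage your write-up, the quickest fix is to replace the Milnor-invariant paragraph by this explicit transposition move (in your handlebody picture: push a meridian disk of one $C_b$ along $D_a$ past an adjacent intersection, at the cost of a red--red crossing change), which legitimately uses only moves at your disposal.
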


\begin{proof}
The matrix~$\left(\lk(K_1,K_2)\right)_{K_1\subset L_1,K_2\subset L_2}$, considered up to reordering the rows and columns, is obviously invariant under homotopy.
It remains to prove that this matrix determines the homotopy class of a~2-colored link.
To do so, let us fix a~2-colored link~$L=L_1\cup L_2$ together with arbitrary orderings of the components of~$L_1$ and of the components of~$L_2$. We will show that~$L$ is homotopic to a link in a normal form
which is uniquely determined up to isotopy by the corresponding matrix~$\left(\lk(K_1,K_2)\right)_{K_1\subset L_1,K_2\subset L_2}$.

Using homotopy between strands of color~1 (blue), one can first turn~$L_1$ into the trivial link.
To make it more concrete, let us say that~$L_1$ is the closure of the trivial oriented braid
in the cylinder~$D^2\times[0,1]$, which we picture horizontally with the braid oriented left to right.
Up to isotopy, the components of~$L_1$ can be permuted arbitrarily, so we can assume that their ordering (say,
top to bottom) coincides with the fixed ordering.
As a second step, we can now use homotopy between strands of color~2 (red) so that each of the components of~$L_2$
lies within a different slice of the cylinder, as illustrated in Figure~\ref{Normalform} (left).
Using homotopy, the components of~$L_2$ can be permuted so that their ordering (left to right) coincides with the fixed ordering.

It now remains to put any fixed component~$K_2$ of~$L_2$ in a normal form (within the corresponding cylinder slice) determined by the linking numbers~$\{\lk(K_1,K_2)\}_{K_1\subset L_1}$.
To do so, first use homotopy within~$K_2$ to unknot it, and to put it in an ``increasing'' form inside its cylinder
slice as illustrated in Figure~\ref{Normalform} (center).
Finally, consider the transformation illustrated below, which can be realised via a homotopy between components
of color~1.

\begin{figure}[h]
\centering
\includegraphics[width=0.7\linewidth]{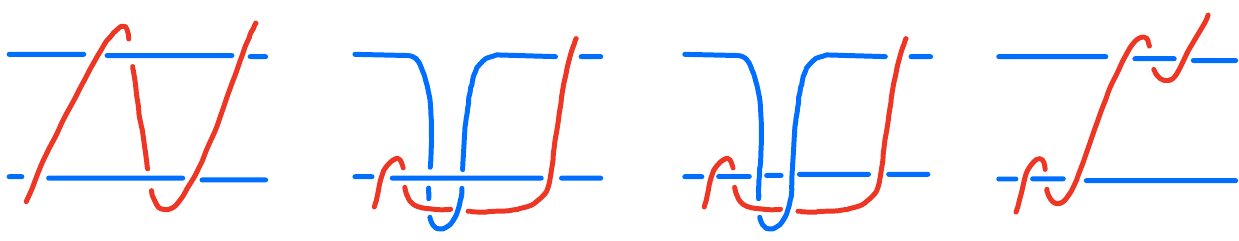}
\end{figure}

\noindent Using this movement, one can exchange the positions of two consecutive crossings between~$K_2$ and~$L_1$.
Since elementary transpositions generate all permutations, the crossings of~$K_1$ with~$L_2$ can 
be permutated arbitrarily. Thus, we can order these crossings according to our fixed
ordering of the components of~$L_1$, leading to the final normal form illustrated in Figure~\ref{Normalform} (right). This normal form being uniquely determined up to isotopy by the matrix~$\left(\lk(K_1,K_2)\right)_{K_1\subset L_1,K_2\subset L_2}$, this concludes the proof.
\end{proof}

\begin{figure}
\centering
\includegraphics[width=0.85\linewidth]{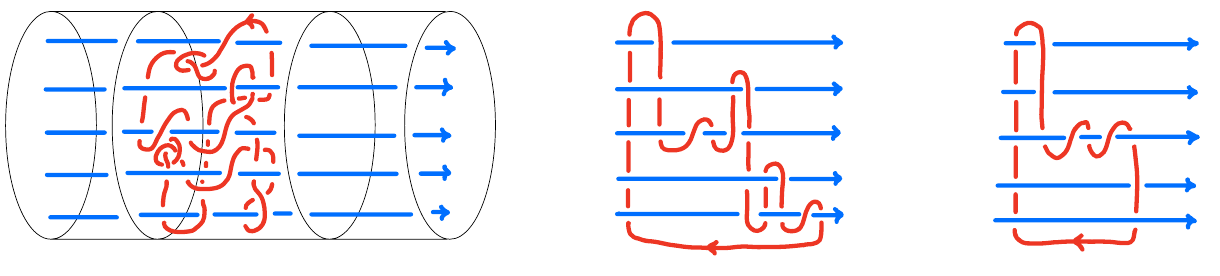}
\caption{The three steps in obtaining the normal form.}
\label{Normalform}
\end{figure}

\begin{proof}[Proof of Proposition~\ref{prop:lk}]
Let us fix an~$m$-colored link~$L$ and some subset~$E\subset\{\pm\}^m$ of even cardinality.
By Lemma~\ref{partition}, there exists a partition~$\{1,\dots, m\}=I_0\sqcup I_1$ such that~$\Arf_{\!E}(L)$
is invariant under crossing changes between components of colors belonging to the same subset.
\dc{If one of the subsets~$I_0$ or~$I_1$ is empty, then~$\Arf_{\!E}(L)$ does not depend on~$L$ and the 
proposition holds trivially. Otherwise,}
consider the~2-colored link~$\widetilde{L}$ obtained from~$L$ by identifying the colors belonging to the same subset of the partition.
We now know that the value of~$\Arf_{\!E}(L)$ is invariant under homotopy of the~2-colored link~$\widetilde{L}$.
By Lemma~\ref{lemma:classification}, the homotopy class of such a~2-colored link is determined by the
linking numbers of its components.
Since the linking numbers of the components of~$L$ and of~$\widetilde{L}$ coincide,
this concludes the proof.
\end{proof}

\appendix

\section{Generalized S-equivalence for totally connected C-complexes}
\label{sec:appendix}

\dc{Consider two totally connected C-complexes for two isotopic colored links.
The aim of this appendix is to give a detailed description of how the two sets of corresponding
generalized Seifert matrices are related. (See~\cite{Cim04,DMO21} for similar computations 
involving C-complexes that are not necessarily totally connected.)}

\medskip

\dc{We start with the proof of Lemma~\ref{S-equivalence}, whose statement we recall for the reader's convenience.}

\begin{lemma}
Let $F$ and $F'$ be two totally connected C-complexes for isotopic colored links. Then, there exist totally connected C-complexes $F=F^1,F^2,\dots,F^{n-1},F^n=F'$ such that for all~$1\le k <n$,~$F^{k+1}$ is obtained from $F^k$ by one of the following moves or its inverse:
\begin{itemize}
\item[(T0)] ambient isotopy;
\item[(T1)] handle attachment along one of the surfaces (see Figure~\ref{ST1-2}, top);
\item[(T2)] add a ribbon intersection and push along an arc (see Figure \ref{T23}, left);
\item[(T3)] pass through a clasp (see Figure \ref{T23}, right);
\item[(T4)] replace a push along an arc by a push along another arc (see Figure~\ref{T4}).
\end{itemize}
\end{lemma}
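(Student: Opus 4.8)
The only external input needed is Theorem~1.3 of~\cite{DMO21}, which produces, for two isotopic colored links, a finite sequence of (possibly non-totally-connected, possibly disconnected-summand) C-complexes $F=G^{0},G^{1},\dots,G^{N}=F'$ in which consecutive terms differ by an elementary move. On the list of moves appearing there, the ones preserving both total connectivity and connectedness of every summand are exactly (T0)--(T4); the remaining ones either change the number of connected components of some~$F_{i}$, or change the collection of pairs $(i,j)$ with $F_{i}\cap F_{j}\neq\emptyset$ by inserting or deleting a single \emph{trivial} clasp (in the sense of Figure~\ref{clasp-regular}, sitting inside a ball meeting $F$ in two unknotted disks) between two otherwise disjoint summands. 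The plan is to ``thicken'' each $G^{k}$ into a totally connected C-complex $\widehat{G}^{k}$ with connected summands, for the same colored link, in such a way that $\widehat{G}^{0}=F$, $\widehat{G}^{N}=F'$, and consecutive terms $\widehat{G}^{k-1}$, $\widehat{G}^{k}$ are related by moves (T0)--(T4) or their inverses; concatenating then yields the lemma.

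To thicken a given $G^{k}$, I would (i) join the connected components of each disconnected $F_{i}^{k}$ by thin trivial tubes running through the connected complement $S^{3}\setminus(F^{k}\setminus F_{i}^{k})$, which is realized by a composition of handle attachments (T1), each with both feet on~$F_{i}$, and (ii) for every pair $i\neq j$ with $F_{i}^{k}\cap F_{j}^{k}=\emptyset$, insert one trivial clasp inside a ball in which both surfaces appear as unknotted disks; such a ball exists because $F^{k}$ is connected. Since tubes and clasps do not change the boundary, $\widehat{G}^{k}$ is again a C-complex for the same link, now totally connected and with connected summands, and the endpoints $F$ and~$F'$ need no thickening. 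The essential point is to perform these choices \emph{coherently}: the tubes and clasps attached to $G^{k-1}$ and to $G^{k}$ should be taken disjoint from the support of the move $M_{k}\colon G^{k-1}\rightsquigarrow G^{k}$, and identical (up to an isotopy supported away from that move) whenever $G^{k-1}$ and $G^{k}$ share the same connectivity type.

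It then remains to analyse each step $\widehat{G}^{k-1}\rightsquigarrow\widehat{G}^{k}$. If $M_{k}$ is one of (T0)--(T4), it is performed away from the shared added gadgets, hence relates $\widehat{G}^{k-1}$ to $\widehat{G}^{k}$ directly. If $M_{k}$ splits or merges components of some~$F_{i}$, then on the thickened side it becomes the deletion or creation of one of our trivial tubes, i.e.\ a move (T1) or its inverse, carried out with all the other gadgets present and both C-complexes remaining totally connected. Finally, if $M_{k}$ inserts (resp.\ deletes) a trivial clasp between $F_{i}$ and~$F_{j}$, then exactly one of $\widehat{G}^{k-1}$, $\widehat{G}^{k}$ carries the trivial clasp that we chose for the pair $(i,j)$; an ambient isotopy (T0) slides it onto the position of the clasp created (resp.\ removed) by $M_{k}$, after which the two thickened C-complexes agree up to isotopy, since any two trivial clasps between a fixed pair of connected summands are ambient isotopic rel the rest, the relevant complement being connected. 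I expect the main difficulty to be precisely this bookkeeping together with its last ingredient: one must pin down the exact list of moves of~\cite{DMO21} and check that each of its connectivity-changing moves does nothing more than insert or remove a trivial clasp or tube, and one must verify that a trivial gadget can always be isotoped into any prescribed trivial position --- a point which, as in~\cite{Cim04}, ultimately reduces to connectivity of the various complements.
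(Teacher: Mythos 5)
Your plan rests on a false premise: that the moves (T0)--(T4) themselves preserve total connectivity, so that the only connectivity-changing steps coming from~\cite{DMO21} are separate ``trivial clasp/trivial disk'' moves that you can absorb into your gadgets. This is not the case, and it is exactly the point of the paper's proof. A move~(T3), and likewise the inverse of a move~(T2), can delete the last clasps between two summands and thus destroy total connectivity even though it belongs to the list (T0)--(T4); Remark~\ref{rem:c-ex} exhibits precisely such a (T3). Consequently, in your case analysis the step ``if $M_k$ is one of (T0)--(T4), it is performed away from the gadgets and lifts directly'' silently skips the only genuinely problematic situation: the move is of an allowed type, but after it the required gadget set changes, so $\widehat{G}^{k-1}$ and $\widehat{G}^{k}$ differ not only by $M_k$ but also by the creation or removal of a gadget, and you give no sequence of legal moves, through totally connected complexes, realizing that discrepancy. (Your description of the move list of Theorem~1.3 of~\cite{DMO21} is also a guess; the paper actually uses Lemma~4.2 together with Propositions~4.3 and~4.6 of~\cite{DMO21}, and the first of these already reduces to the case where all summands are connected and isotopic, since isotopies and handle attachments preserve connectedness and total connectivity --- so no ``tube'' step is ever needed.)

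Moreover, the gadget you chose cannot be traded in and out by the allowed moves: inserting a \emph{single} trivial clasp between two disjoint summands is not a move of type (T0)--(T4) (it changes the first Betti number of the complex by one, whereas (T2) changes it by two), so even if you isolated the problematic case, your repair mechanism has no legal implementation. The paper's fix is designed around exactly this constraint: the auxiliary ``connection'' is a clasp \emph{pair} created by a genuine (T2) move along an arc (Figure~\ref{connection}), and the bookkeeping is ordered so that total connectivity is never lost --- before a move whose support could interfere, a new connection is created along a fresh arc disjoint from that support and from the old connection, only then is the old connection removed by an inverse (T2), and only then is the move performed; this is iterated over every pair of colors. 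That transfer argument is the substance of the proof of Lemma~\ref{S-equivalence}, and it is the piece your ``coherent choice of gadgets'' leaves unproved.
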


\begin{proof}
Let~$F=F_1\cup\dots\cup F_m$ and~$F'=F'_1\cup\dots\cup F'_m$ be two totally connected C-complexes for isotopic colored links.
By Lemma 4.2 of \cite{DMO21}, there exists a C-complex~$G=G_1\cup\dots\cup G_m$
(resp.~$G'=G'_1\cup\dots\cup G'_m$) obtained from~$F$ (resp.~$F'$) by a sequence of isotopies and handle attachments, such that~$G_i$ is isotopic to~$G_i'$ for all~$i$.
Since the C-complexes~$F$ and~$F'$ are assumed to be totally connected with connected surfaces,
so are all the C-complexes in these two sequences. Therefore, it can be assumed that~$F$ and~$F'$
are (totally connected) C-complexes with~$F_i$ and~$F'_i$ connected and isotopic for all~$i$.
Propositions~4.3 and~4.6  of~\cite{DMO21} then imply that there exists a sequence of C-complexes $F=F^1,F^2, \dots,F^{2\ell}=F'$ such that for all~$1\le k<\ell$, the C-complex~$F^{2k}$ is obtained from~$F^{2k-1}$ by an ambient isotopy while~$F^{2k+1}$ is obtained from~$F^{2k}$ by a move~(T2), (T3), (T4), or its inverse.
The possible issue here is that the inverse of a move~(T2) or a move~(T3) might transform a totally connected
C-complex into a no longer totally connected one. However, the following procedure ensures that such a
move can be replaced by other ones that keep the C-complexes totally connected.

To describe it, we first fix some terminology and notation.
For all~$1\le k<\ell$, let us denote by~$\mathrm{T}_k$ the move performed on~$F^{2k}$ to obtain~$F^{2k+1}$, and by~$B_k$ a (small)~$3$-ball containing the support of this move.
Fixing~$1\le i\neq j\le m$, we shall say that a C-complex is~\emph{$(i,j)$-connected} if its surfaces of colors~$i$
and~$j$ intersect. If all the C-complexes~$F^1,F^2, \dots,F^{2\ell}$ are~$(i,j)$-connected, then there is nothing to do.
Otherwise, let~$\kappa\in\{1,\dots,\ell-1\}$ denote the first time when~$(i,j)$-connectivity is lost; in other words, let us suppose that~$F^{2k-1}\simeq F^{2k}$ is~$(i,j)$-connected for~$1\le k\le\kappa$, but that~$F^{2\kappa+1}_i \cap F^{2\kappa+1}_j$ is empty.
Then, fix a simple arc~$\gamma_\kappa$ linking a point of~$\partial F^{2\kappa}_i \backslash B_{\kappa}$ and a point of~$\partial F^{2\kappa}_j \backslash B_{\kappa}$, with the interior of~$\gamma_\kappa$ disjoint from~$F^{2\kappa}\cup B_{\kappa}$. As illustrated in Figure~\ref{connection}, perform a ``connection'' via a move (T2) along~$\gamma_\kappa$, a transformation whose support lies in a small tubular neihbourhood~$\Gamma_\kappa$ of~$\gamma_\kappa$.
Performing the move~(T$_\kappa$) on the result produces an~$(i,j)$-connected C-complex~$(F^{2\kappa+1})'$,
and applying the ambient isotopy~$F^{2\kappa+1}\simeq F^{2\kappa+2}$ to~$(F^{2\kappa+1})'$ yields a C-complex~$(F^{2\kappa+2})'$ which is obviously~$(i,j)$-connected as well.
Since the image of~$\Gamma_\kappa$ by the previous isotopy might intersect~$B_{\kappa+1}$, let us now first
perform a new connection along an arc~$\gamma_{\kappa+1}$ linking a point of~$(F^{2\kappa+2})'_i \backslash B_{\kappa+1}$ with a point of~$(F^{2\kappa+2})'_j \backslash B_{\kappa+1}$, with the interior of~$\gamma_{\kappa+1}$ disjoint from~$(F^{2\kappa+2})' \cup B_{\kappa+1}$.
Then, remove the first connection (via the inverse of a move (T2) and an isotopy), and finally perform the move ($\mathrm{T}_{\kappa+1}$) on the result, yielding an~$(i,j)$-connected C-complex~$(F^{2\kappa+3})'$.
Iterating this procedure produces a sequence of~$(i,j)$-connected C-complexes from~$F$ to a C-complex obtained from~$F'$ by a move (T2) involving the surfaces~$F_i'$ and~$F_j'$. This connection can be removed, resulting
in a sequence of~$(i,j)$-connected C-complexes from~$F$ to~$F'$.
Applying this to all pairs of colors yields the desired result.
\end{proof}

\begin{figure}[h]
\centering
\begin{overpic}[width=0.4\linewidth]{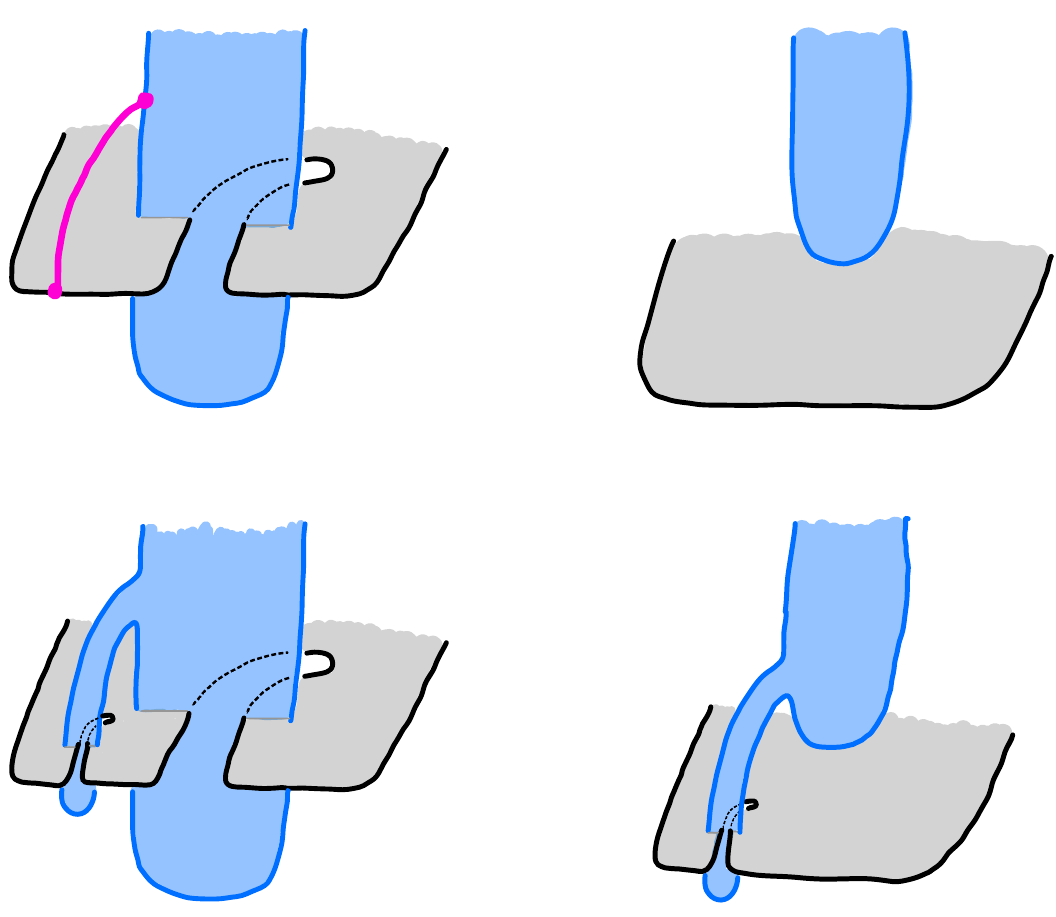}
\put(-4,81){$F^{2\kappa}$}
\put(95,80){$F^{2\kappa+1}$}
\put(95,30){$\left(F^{2\kappa+1}\right)'$}
\put(44,65){$\left(\mathrm{T}_\kappa\right)$}
\put(45,60){$\longrightarrow$}
\put(45,20){$\left(\mathrm{T}_\kappa\right)$}
\put(46,15){$\longrightarrow$}
\put(5,77){$\gamma_\kappa$}
\put(19,41){$\downarrow$}

\end{overpic}
\caption{A connection via a move (T2) along an arc.}
\label{connection}
\end{figure}

\dc{We now exhibit how moves~(T1) to~(T4) affect the corresponding generalized Seifert matrices.
The result is a extended notion of S-equivalence valid for generalized Seifert matrices associated to
totally connected C-complexes.} This requires some notation.

\begin{notation}
\label{notation}
Given two signs~$\varepsilon,\sigma\in\{\pm\}$, we write~$\delta(\varepsilon,\sigma)=1$ if~$\varepsilon=\sigma$, and~$\delta(\varepsilon,\sigma)=0$ otherwise.
Also, for~$z$ an element of a basis of a free abelian group~$H\simeq\Z^d$, we let~$\chi(z)$ denote the element in~$\Z^d$ with the~$z$-coordinate equal to~$1$ and all the others equal to~$0$.
\end{notation}

(T1) Let~$F'$ be a C-complex obtained from a C-complex~$F$ by a movement~(T1)
on the $\sigma_i$-side of $F_i$,
for some fixed color~$i\in\{1,\dots,m\}$ and sign~$\sigma_i=\pm$. Since~$F_i$ is connected,
we have~$H_1(F')=H_1(F)\oplus \mathbb{Z}x' \oplus \mathbb{Z}y'$, with~$x'$ and~$y'$ the cycles represented in the top part of Figure~\ref{ST1-2}.
 In the corresponding basis of~$H_1(F')$, the generalized Seifert matrix~$A^\varepsilon_{F'}$ is obtained from $A^\varepsilon_F$ via
\begin{equation}
\label{eq:T1}
A^\varepsilon_{F'}=\begin{pmatrix}
A^\varepsilon_F & 0 & \xi(\varepsilon_i) \\
0 & 0 & -\delta(\varepsilon_i,\sigma_i) \\
\xi(-\varepsilon_i)^{\T} & -\delta(\varepsilon_i,-\sigma_i) & \ell
\end{pmatrix}\,,
\end{equation}
with~$\ell\in\Z$ and~$\xi(\varepsilon_i)$ some integral vector.

\medskip

(T2) Let us now fix two colors~$i,j\in\{1,\ldots,m\}$ and two signs~$\sigma_i, \sigma_j$. Let~$F'$ be the (totally connected) C-complex obtained from a totally connected C-complex~$F$ by a movement~(T2) between the surfaces~$F_i$ and~$F_j$, as illustrated in the bottom part of Figure~\ref{ST1-2}. Since~$F$ is totally connected,
we have~$H_1(F')=H_1(F)\oplus \mathbb{Z}x' \oplus \mathbb{Z}y'$, with~$x',y'\subset F'_i\cup F'_j$ the cycles represented in Figure~\ref{ST1-2}.
In the corresponding basis of~$H_1(F')$, the generalized Seifert matrix~$A^\varepsilon_{F'}$ is obtained from~$A^\varepsilon_F$ via
\[
A^\varepsilon_{F'}=\begin{pmatrix}
A^\varepsilon_F & 0 & \xi(\varepsilon_i,\varepsilon_j)\\
0 & 0 & -\delta(\varepsilon_i,\sigma_i)\delta(\varepsilon_j,\sigma_j)\\
\xi(-\varepsilon_i,-\varepsilon_j)^{\T} & -\delta(\varepsilon_i,-\sigma_i)\delta(\varepsilon_j,-\sigma_j) & \ell(\varepsilon_i,\varepsilon_j)
\end{pmatrix}\,,
\]
with~$\ell(\varepsilon_i,\varepsilon_j)=\ell(-\varepsilon_i,-\varepsilon_j)\in\Z$ and~$\xi(\varepsilon_i,\varepsilon_j)$ some integral vector which only depends on~$\varepsilon_i$ and~$\varepsilon_j$.

\begin{figure}
\centering
\begin{overpic}[width=0.6\linewidth]{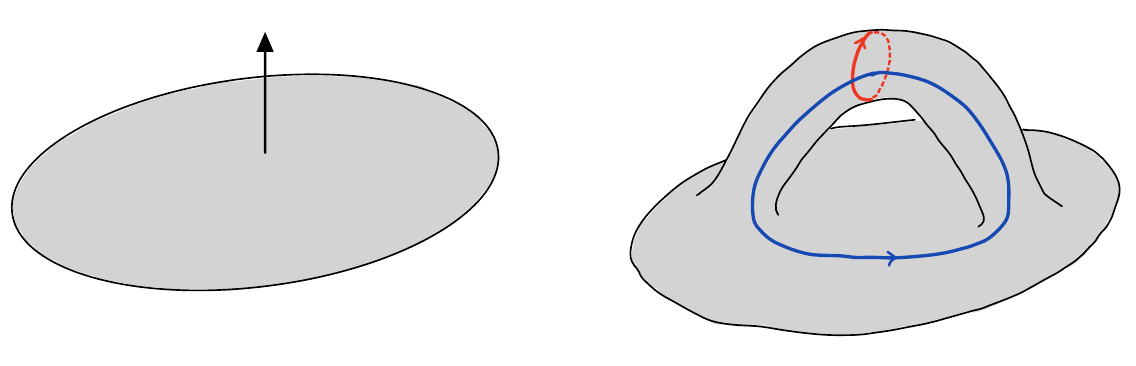}
\put(1,25){$F$}
\put(95,25){$F'$}
\put(18,29){$\sigma_i$}
\put(74,31){$x'$}
\put(68,6){$y'$}
\put(48,13){$\longrightarrow$}
\end{overpic}
\begin{overpic}[width=0.6\linewidth]{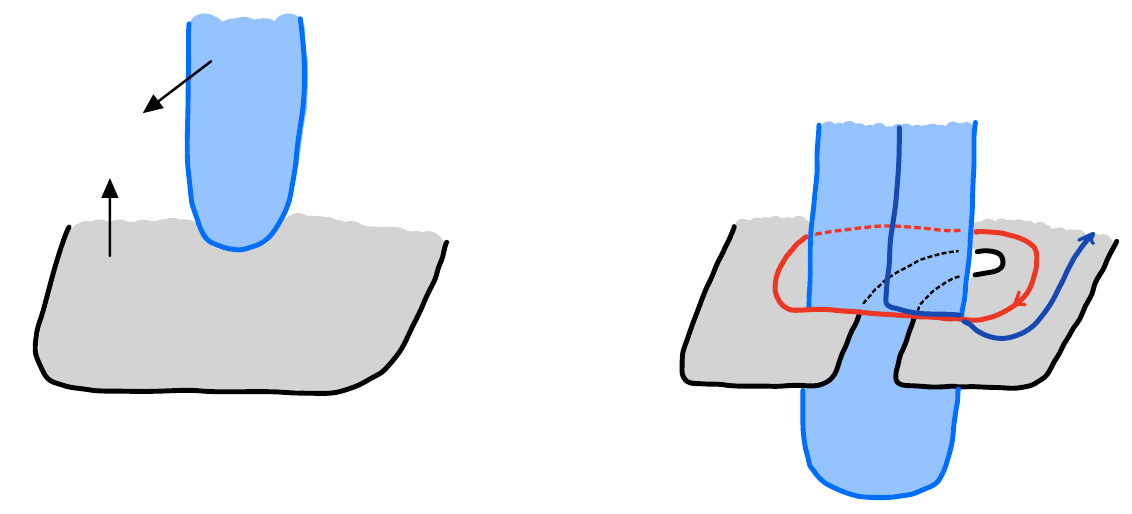}
\put(1,33){$F$}
\put(95,30){$F'$}
\put(9,37){$\sigma_i$}
\put(9,31){$\sigma_j$}
\put(66,23){$x'$}
\put(79,36){$y'$}
\put(48,15){$\longrightarrow$}
\end{overpic}
\caption{For the moves~(T1) and~(T2), a basis of~$H_1(F')$ is obtained by adding the cycles~$x'$ and~$y'$ to a basis of~$H_1(F)$.}
\label{ST1-2}
\end{figure}

\medskip

(T3) Fix three colors~$i,j,k\in\{1,\ldots,m\}$ and three signs~$\sigma_i, \sigma_j, \sigma_k$. Let $F'$ be the totally connected C-complex obtained from the totally connected C-complex~$F$ by a movement~(T3) between the surfaces~$F_i$,~$F_j$ and~$F_k$ as illustrated in Figure~\ref{ST3}.
The homology groups~$H_1(F)$ and~$H_1(F')$ are naturally isomorphic,
with the cycle~$x$ in~$F$ mapped to the cycle~$x'$ in~$F'$, see Figure~\ref{ST3}.
The total connectivity of~$F$ (resp.~$F'$) guarantees the existence of a cycle~$y\subset F_i\cup F_k$ (resp.~$y'\subset F'_i\cup F'_j\cup F'_k$)
as illustrated in Figure~\ref{ST3},
with the natural isomorphism~$H_1(F)\simeq H_1(F')$ mapping~$y$ to~$y'$.
For the same reason, there exist cycles~$z\subset F$ and~$z'\subset F'$ as depicted in Figure~\ref{ST3};
moreover, one can fix a basis of~$H_1(F)$ (resp.~$H_1(F')$) having a unique cycle~$z$ (resp.~$z'$) of this form.
With respect to a basis of~$H_1(F)$ (resp.~$H_1(F')$) having~$x,y$ (resp.~$x',y'$) as last two elements,
the generalized Seifert matrices then have the form
\[
A^\varepsilon_{F}=\begin{pmatrix}
A(\varepsilon) & -\delta(\varepsilon_i,-\sigma_i)\chi(z) & \xi(\varepsilon_i,\varepsilon_k)\\
-\delta(\varepsilon_i,\sigma_i)\chi(z)^{\T} & 0 & -\delta(\varepsilon_i,\sigma_i)\delta(\varepsilon_k,\sigma_k)\\
\xi(-\varepsilon_i,-\varepsilon_k)^{\T} & -\delta(\varepsilon_i,-\sigma_i)\delta(\varepsilon_k,-\sigma_k) & \ell(\varepsilon_i,\varepsilon_k)
\end{pmatrix}
\]
and
\[
A^\varepsilon_{F'}=\begin{pmatrix}
A(\varepsilon) & -\delta(\varepsilon_j,-\sigma_j)\chi(z') & \xi(\varepsilon_i,\varepsilon_j, \varepsilon_k)\\
-\delta(\varepsilon_j,\sigma_j)\chi(z)^{\T} & 0 & -\delta(\varepsilon_j,\sigma_j)\delta(\varepsilon_k,\sigma_k)\\
\xi(-\varepsilon_i,-\varepsilon_j,-\varepsilon_k)^{\T} & -\delta(\varepsilon_j,-\sigma_j)\delta(\varepsilon_k,-\sigma_k) & \ell(\varepsilon_i, \varepsilon_j, \varepsilon_k)
\end{pmatrix}\,,
\]
with~$\ell(\varepsilon_i, \varepsilon_j, \varepsilon_k)\in\mathbb{Z}$,~$\xi(\varepsilon_i, \varepsilon_j, \varepsilon_k)$ an integral vector and~$A(\varepsilon)$ an integral matrix.

\begin{figure}
\centering
\begin{overpic}[width=0.6\linewidth]{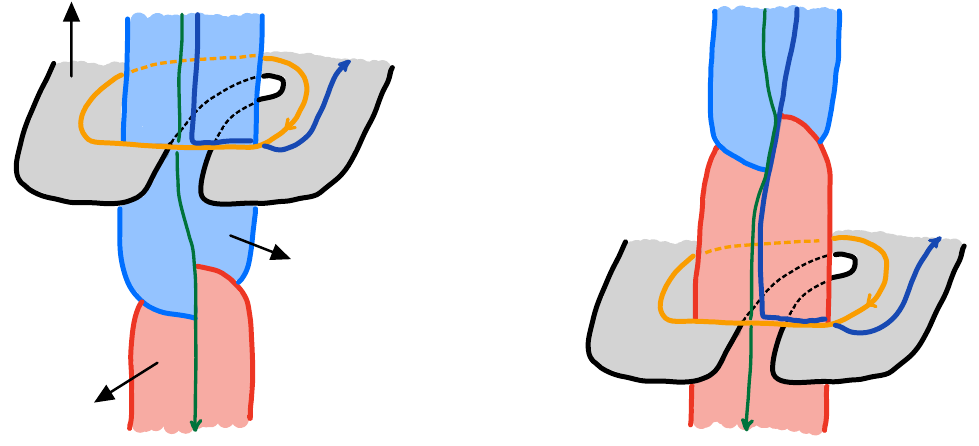}
\put(-2,42){$F$}
\put(65,35){$F'$}
\put(30,17){$\sigma_i$}
\put(6,47){$\sigma_j$}
\put(5,4){$\sigma_k$}
\put(5,28){$x$}
\put(35,41){$y$}
\put(19,-2){$z$}
\put(65,9){$x'$}
\put(95,23){$y'$}
\put(75,-2){$z'$}
\put(46,22){$\longrightarrow$}
\end{overpic}
\caption{The surfaces and cycles involved in the move~(T3).}
\label{ST3}
\end{figure}

\medskip

(T4) Finally, fix two colors~$i,j\in\{1,\ldots,m\}$ and two signs~$\sigma_i, \sigma_j$. Let $F'$ be the C-complex obtained from a C-complex~$F$ by a movement~(T4) between the surfaces~$F_i$ and~$F_j$ as in Figure~\ref{ST4}.
Via conjugation by a move~(T2) along~$F_i\cup F_j$ and by moves~(T1) along~$F_i$ and along~$F_j$,
one can fix a basis of~$H_1(F)$ (resp.~$H_1(F')$) containing exactly three cycles~$x,y,z\subset F_i\cup F_j$ (resp.~$x',y',z'\subset F'_i\cup F_j'$) as in Figure~\ref{ST4}.
There is a natural isomorphism~$H_1(F)\simeq H_1(F')$ mapping the classes of~$x,y,z$ to the classes of~$x',y',z'$, respectively.
Expressed in a basis of~$H_1(F)$ (resp.~$H_1(F')$) as above and having~$x,y$ (resp.~$x',y'$) as last two elements,
and using the aforementioned isomorphism, the generalized Seifert matrices have the form
\begin{equation}
\label{eq:T4}
A^\varepsilon_{F}=\begin{pmatrix}
A(\varepsilon) & \delta(\varepsilon_i,-\sigma_i)\chi(z) & \xi(\varepsilon_i,\varepsilon_j)\\
\delta(\varepsilon_i,\sigma_i)\chi(z)^{\T} & 0 & \delta(\varepsilon_i,\sigma_i)\delta(\varepsilon_j,\sigma_j)\\
\xi(-\varepsilon_i,-\varepsilon_j)^{\T} & \delta(\varepsilon_i,-\sigma_i)\delta(\varepsilon_j,-\sigma_j) & \ell(\varepsilon_i,\varepsilon_j)
\end{pmatrix}
\end{equation}
and
\begin{equation}
\label{eq:T4'}
A^\varepsilon_{F'}=\begin{pmatrix}
A(\varepsilon) & \delta(\varepsilon_i,\mp\sigma_i)\chi(z') & \xi(\varepsilon_i,\varepsilon_j)+n\chi(z')\\
\delta(\varepsilon_i,\pm\sigma_i)\chi(z')^{\T} & 0 & \delta(\varepsilon_i,\pm\sigma_i)\delta(\varepsilon_j,\sigma_j)\\
\xi(-\varepsilon_i,-\varepsilon_j)^{\T}+n\chi(z')^{\T} & \delta(\varepsilon_i,\mp\sigma_i)\delta(\varepsilon_j,-\sigma_j) & \ell(\varepsilon_i, \varepsilon_j)+n
\end{pmatrix}\,,
\end{equation}
with~$\ell(\varepsilon_i, \varepsilon_j)\in\mathbb{Z}$,~$\xi(\varepsilon_i,\varepsilon_j)$ an integral vector,~$A(\varepsilon)$ an integral matrix, and~$n\in\mathbb{Z}$ independent of~$\varepsilon$.
This integer is the winding number of the path around the ribbon; in particular, it vanishes in the case of~$(\mathrm{T4_a})$ illustrated in the left of Figure~\ref{T4}.
The sign in front of~$\sigma_i$ depends on how the arc traverses~$F_i$, the two different cases being illustrated
in Figure~\ref{ST4}.

\begin{figure}
\centering
\begin{overpic}[width=0.7\linewidth]{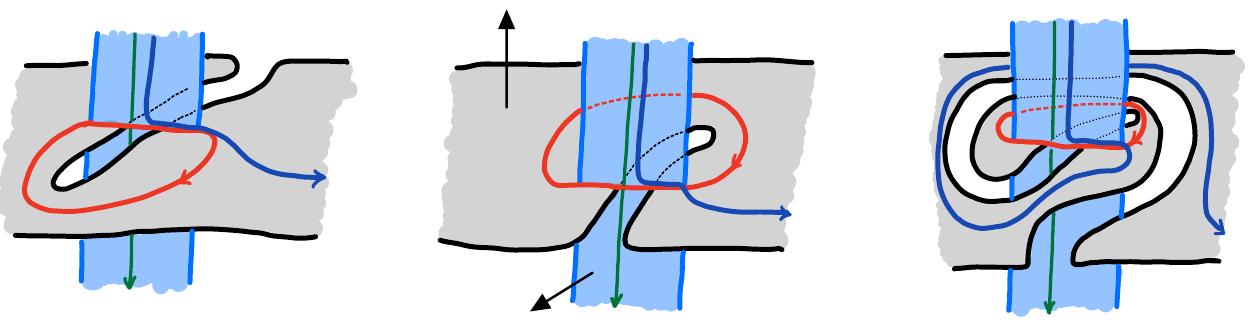}
\put(60,24){$F$}
\put(0,24){$F'$}
\put(100,24){$F'$}
\put(40,27){$\sigma_j$}
\put(40,-1){$\sigma_i$}
\put(64,8.5){$x$}
\put(35,41){$y$}
\put(49,-1){$z$}
\put(2,16){$x'$}
\put(27,10.5){$y'$}
\put(9,0){$z'$}
\put(78.6,12.8){\scriptsize{$x'$}}
\put(99,6){$y'$}
\put(83,-2){$z'$}
\put(67,15){$\longrightarrow$}
\put(29,15){$\longleftarrow$}
\end{overpic}
\caption{The surfaces and cycles involved in two possible versions of the move~(T4).}
\label{ST4}
\end{figure}

\medskip

In conclusion, two sets of generalized Seifert matrices corresponding to totally connected C-complexes for isotopic colored links are related
by simultaneous congruences, and the four transformations described above.

\bibliographystyle{plain}
\bibliography{Arf}

\begin{thebibliography}{10}

\bibitem{Ale28}
J.~W. Alexander.
\newblock Topological invariants of knots and links.
\newblock {\em Trans. Amer. Math. Soc.}, 30(2):275--306, 1928.

\bibitem{Arf41}
Cahit Arf.
\newblock Untersuchungen \"{u}ber quadratische {F}ormen in {K}\"{o}rpern der
  {C}harakteristik 2. {I}.
\newblock {\em J. Reine Angew. Math.}, 183:148--167, 1941.

\bibitem{Bla57}
Richard~C. Blanchfield.
\newblock Intersection theory of manifolds with operators with applications to
  knot theory.
\newblock {\em Ann. of Math. (2)}, 65:340--356, 1957.

\bibitem{Cim04}
David Cimasoni.
\newblock A geometric construction of the {C}onway potential function.
\newblock {\em Comment. Math. Helv.}, 79(1):124--146, 2004.

\bibitem{C-F08}
David Cimasoni and Vincent Florens.
\newblock Generalized {S}eifert surfaces and signatures of colored links.
\newblock {\em Trans. Amer. Math. Soc.}, 360(3):1223--1264, 2008.

\bibitem{COT03}
Tim~D. Cochran, Kent~E. Orr, and Peter Teichner.
\newblock Knot concordance, {W}hitney towers and {$L^2$}-signatures.
\newblock {\em Ann. of Math. (2)}, 157(2):433--519, 2003.

\bibitem{CST14}
J.~Conant, R.~Schneiderman, and P.~Teichner.
\newblock Milnor invariants and twisted {W}hitney towers.
\newblock {\em J. Topol.}, 7(1):187--224, 2014.

\bibitem{Con18}
Anthony Conway.
\newblock An explicit computation of the {B}lanchfield pairing for arbitrary
  links.
\newblock {\em Canad. J. Math.}, 70(5):983--1007, 2018.

\bibitem{CFT18}
Anthony Conway, Stefan Friedl, and Enrico Toffoli.
\newblock The {B}lanchfield pairing of colored links.
\newblock {\em Indiana Univ. Math. J.}, 67(6):2151--2180, 2018.

\bibitem{C-N20}
Anthony Conway and Matthias Nagel.
\newblock Stably slice disks of links.
\newblock {\em J. Topol.}, 13(3):1261--1301, 2020.

\bibitem{Con70}
J.~H. Conway.
\newblock An enumeration of knots and links, and some of their algebraic
  properties.
\newblock In {\em Computational {P}roblems in {A}bstract {A}lgebra ({P}roc.
  {C}onf., {O}xford, 1967)}, pages 329--358. Pergamon, Oxford, 1970.

\bibitem{Coo82}
D.~Cooper.
\newblock The universal abelian cover of a link.
\newblock In {\em Low-dimensional topology ({B}angor, 1979)}, volume~48 of {\em
  London Math. Soc. Lecture Note Ser.}, pages 51--66. Cambridge Univ. Press,
  Cambridge-New York, 1982.

\bibitem{DMO21}
Christopher~W. Davis, Taylor Martin, and Carolyn Otto.
\newblock Moves relating {C}-complexes: a correction to {C}imasoni's ``{A}
  geometric construction of the {C}onway potential function''.
\newblock {\em Topology Appl.}, 302:Paper No. 107799, 16, 2021.

\bibitem{FMN21}
Peter Feller, Allison~N. Miller, Matthias Nagel, Patrick Orson, Mark Powell,
  and Arunima Ray.
\newblock Embedding spheres in knot traces.
\newblock {\em Compositio Mathematica}, 157(10):2242--2279, oct 2021.

\bibitem{F-P17}
Stefan Friedl and Mark Powell.
\newblock A calculation of {B}lanchfield pairings of 3-manifolds and knots.
\newblock {\em Mosc. Math. J.}, 17(1):59--77, 2017.

\bibitem{Har83}
Richard Hartley.
\newblock The {C}onway potential function for links.
\newblock {\em Comment. Math. Helv.}, 58(3):365--378, 1983.

\bibitem{Kau81}
Louis~H. Kauffman.
\newblock The {C}onway polynomial.
\newblock {\em Topology}, 20(1):101--108, 1981.

\bibitem{Kea75}
C.~Kearton.
\newblock Blanchfield duality and simple knots.
\newblock {\em Trans. Amer. Math. Soc.}, 202:141--160, 1975.

\bibitem{Lev66}
J.~Levine.
\newblock Polynomial invariants of knots of codimension two.
\newblock {\em Ann. of Math. (2)}, 84:537--554, 1966.

\bibitem{Lev69}
J.~Levine.
\newblock Knot cobordism groups in codimension two.
\newblock {\em Comment. Math. Helv.}, 44:229--244, 1969.

\bibitem{Lic97}
W.~B.~Raymond Lickorish.
\newblock {\em An introduction to knot theory}, volume 175 of {\em Graduate
  Texts in Mathematics}.
\newblock Springer-Verlag, New York, 1997.

\bibitem{Mur69}
Kunio Murasugi.
\newblock The {A}rf invariant for knot types.
\newblock {\em Proc. Amer. Math. Soc.}, 21:69--72, 1969.

\bibitem{Rob65}
Raymond~A. Robertello.
\newblock An invariant of knot cobordism.
\newblock {\em Comm. Pure Appl. Math.}, 18:543--555, 1965.

\bibitem{Sei35}
H.~Seifert.
\newblock \"{U}ber das {G}eschlecht von {K}noten.
\newblock {\em Math. Ann.}, 110(1):571--592, 1935.

\bibitem{Tri69}
A.~G. Tristram.
\newblock Some cobordism invariants for links.
\newblock {\em Proc. Cambridge Philos. Soc.}, 66:251--264, 1969.

\bibitem{Tro62}
H.~F. Trotter.
\newblock Homology of group systems with applications to knot theory.
\newblock {\em Annals of Mathematics}, 76(3):464--498, 1962.

\end{thebibliography}

\end{document}